\tikzset{black node/.style={draw, circle, fill = black, minimum size = 5pt, inner sep = 0pt}}
\tikzset{normal/.style = {draw=none, fill = none, minimum size =0, rectangle}}
\def\NAT@spacechar{~}
\renewcommand{\geq}{\geqslant}
\renewcommand{\leq}{\leqslant}
\newcommand{\defn}[1]{\textcolor{Maroon}{\emph{#1}}}
\DeclareMathOperator{\conv}{conv}
\DeclareMathOperator{\xc}{xc}
\DeclareMathOperator{\STP}{P}
\DeclareMathOperator{\FP}{P^\downarrow}
\newcommand{\R}{\mathbb R}
\newcommand{\N}{\mathbb N}
\newcommand{\GG}{\mathcal{G}}
\title{Smaller Extended Formulations for Spanning Tree Polytopes in Minor-closed Classes and Beyond}
\author{
Manuel Aprile\footnotemark[2]
\qquad  Samuel Fiorini\footnotemark[3]
\qquad Tony Huynh\footnotemark[4]\\
 Gwena\"{e}l Joret\footnotemark[5] 
\qquad David R. Wood\footnotemark[4] 
}
\begin{document}
\maketitle

\begin{abstract}
Let $G$ be a connected $n$-vertex graph in a proper minor-closed class $\mathcal G$.  We prove that the extension complexity of the spanning tree polytope of $G$ is $O(n^{3/2})$.  This improves on the $O(n^2)$ bounds following from the work of Wong~(1980) and Martin~(1991).  It also extends a result of Fiorini, Huynh, Joret, and Pashkovich~(2017), who obtained a $O(n^{3/2})$ bound for graphs embedded in a fixed surface. Our proof works more generally for all graph classes admitting strongly sublinear balanced separators: We prove that for every constant $\beta$ with $0<\beta<1$, if $\mathcal G$ is a graph class  closed under induced subgraphs such that all $n$-vertex graphs in $\mathcal G$ have balanced separators of size $O(n^\beta)$, then the extension complexity of the spanning tree polytope of every connected $n$-vertex graph in $\mathcal{G}$ is $O(n^{1+\beta})$. 
We in fact give two proofs of this result, one is a direct construction of the extended formulation, the other is via communication protocols. Using the latter approach we also give a short proof of the $O(n)$ bound for planar graphs due to Williams~(2002).
\end{abstract}

\newpage
\section{Introduction}
An \defn{extended formulation} of a (convex) polytope $P \subseteq \R^d$ is a linear system $Ax + By \leqslant b,\ Cx + Dy = c$ in variables $x \in \R^d$ and $y \in \R^k$ such that
$$
P = \{x \in \R^d \mid \exists y \in \R^k : Ax + By \leqslant b,\ Cx + Dy = c\}\,.
$$
The \defn{size} of an extended formulation is its number of inequalities. The \defn{extension complexity} $\xc(P)$ is the minimum size of an extended formulation of $P$. Note that equalities are not counted in the size of an extended formulation. Equivalently, the extension complexity of $P$ is the minimum number of facets of a polytope that affinely projects to $P$.
One motivation for studying extension complexity is that if $P$ has many facets but small extension complexity, then it can be much faster to optimize over an extended formulation of $P$ rather than over $P$ itself.  

A classic example where moving to a higher dimensional space dramatically reduces the number of facets is the spanning tree polytope of a graph, which we now define. Let $G$ be a connected (simple, finite, undirected) graph.  The \defn{spanning tree polytope} of $G$, denoted $\STP(G)$, is the convex hull of the $0/1$ characteristic vectors in $\R^{E(G)}$ of the spanning trees of $G$.

Edmonds'~\cite{Edmonds71} description of the spanning tree polytope in $\R^{E(G)}$ has exponentially many facets. However, \citet{Wong80} and \citet{Martin91} proved that $\xc(\STP(G)) \in O(|V(G)| \cdot |E(G)|)$ for every connected graph $G$.  Since the extension complexity of a polytope is at least its dimension, $\xc(\STP(G)) \geq |E(G)|$, which is the only known lower bound.  A notoriously difficult problem of Michel Goemans asks to improve either of these bounds, but this is still wide open; see \citet{KT18}.

Improved upper bounds are known for restricted graph classes. For example,  \citet{Williams2002}\footnote{See \cite{FHJP17,VB19} for a correction to Williams' proof.} proved that $\xc(\STP(G)) \in O(n)$ for every connected $n$-vertex planar graph $G$. A $O(n)$ bound also holds for graphs of bounded treewidth~\cite{KKT20} and for graphs that can be made planar by deleting a bounded number of vertices~\cite{FHJP17}. \citet{FHJP17} proved that for connected $n$-vertex graphs $G$ with bounded Euler genus, $\xc(\STP(G)) \in O(n^{3/2})$. 

In this paper, we generalise this result to all proper minor-closed graph classes\footnote{A graph $H$ is a \defn{minor} of a graph $G$ if $H$ is isomorphic to a graph obtained from a subgraph of $G$ by contracting edges. A \defn{graph class} is a family of graphs that is closed under isomorphism. A graph class $\GG$ is \defn{minor-closed} if $H\in \GG$ whenever $G\in\GG$ and $H$ is a minor of $G$. We say that $\GG$ is \defn{closed under induced subgraphs} if $H\in \GG$ whenever $G\in\GG$ and $H$ is an induced subgraph of $G$.  Finally, $\GG$ is \defn{proper} if some graph is not in $\GG$.}.  Instead of the spanning tree polytope, we primarily work with the \defn{forest polytope} of $G$, which is the convex hull of the $0/1$ characteristic vectors in $\R^{E(G)}$ of the forests of $G$. We denote this polytope $\FP(G)$. Since $\STP(G)$ is equal to $\FP(G)$ intersected with the hyperplane $\{x \in \mathbb{R}^{E(G)} \mid \sum_{e \in E(G)}x_e=|V(G)|-1\}$, we have $\xc(\STP(G)) \leq \xc(\FP(G))$. Therefore, every upper bound on $\xc(\FP(G))$ yields the same upper bound on $\xc(\STP(G))$.  

\begin{theorem} \label{minorclosed}
For every proper minor-closed graph class $\GG$, the forest polytope of every $n$-vertex graph in $\GG$ has extension complexity in $O(n^{3/2})$.
\end{theorem}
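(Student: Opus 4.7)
The plan is to derive Theorem~\ref{minorclosed} as a special case of a more general result (previewed in the abstract): if $\mathcal{G}$ is a graph class closed under induced subgraphs in which every $n$-vertex graph admits a balanced separator of size $O(n^\beta)$ for some fixed $\beta \in (0,1)$, then the forest polytope of every connected $n$-vertex $G \in \mathcal{G}$ has extension complexity $O(n^{1+\beta})$. Two ingredients then yield Theorem~\ref{minorclosed}: first, every proper minor-closed class is trivially closed under induced subgraphs; second, a proper minor-closed class excludes some complete graph $K_h$ as a minor, and by the Alon--Seymour--Thomas separator theorem every $K_h$-minor-free $n$-vertex graph admits a balanced separator of size $O(h^{3/2}\sqrt{n})$. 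Applying the general result with $\beta = 1/2$ and using $\xc(\STP(G)) \le \xc(\FP(G))$ gives the desired $O(n^{3/2})$ bound.

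The general theorem I would prove by induction on $n$. Given $G \in \mathcal{G}$ with $n$ vertices, find a balanced separator $S$ with $|S| = s = O(n^\beta)$ such that every component of $G-S$ has at most $\alpha n$ vertices for some constant $\alpha < 1$. Partition the components of $G-S$ into two groups of balanced vertex totals, and let $G_1, G_2$ be the subgraphs of $G$ induced by $S$ together with one group each, so that $|V(G_1)|, |V(G_2)| \le c n + s$ for some constant $c<1$ and $|V(G_1)| + |V(G_2)| = n + s$. Since $\mathcal{G}$ is closed under induced subgraphs, $G_1, G_2 \in \mathcal{G}$, and the inductive hypothesis gives extended formulations of $\FP(G_1), \FP(G_2)$ of size $O((cn+s)^{1+\beta})$ each.

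The crux is the gluing step. A set $F \subseteq E(G)$ is a forest if and only if $F \cap E(G_i)$ is a forest in $G_i$ for $i=1,2$ and the auxiliary multigraph on $S$ obtained by adding, for each $i$ and each pair $u,v \in S$ that lie in the same component of $(V(G_i), F \cap E(G_i))$, one edge $uv$, is itself a forest. The difficulty is that the partition of $S$ induced by $F \cap E(G_i)$ is a non-local property and naive enumeration of partitions is exponential in $s$. My plan is to introduce $O(s^2)$ auxiliary variables encoding, for each pair $u,v \in S$ and each side $i$, whether $u$ and $v$ are connected in $F \cap E(G_i)$; couple these variables to the extended formulation of $\FP(G_i)$; and add a Martin/Wong-style formulation of the forest polytope on $S$ applied to the combined auxiliary graph, contributing another $O(s^2)$ inequalities. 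The total new cost per recursive level is $O(s^2) = O(n^{2\beta})$, so writing $T(n)$ for the extension complexity we obtain
\[
T(n) \le T(n_1) + T(n_2) + O(n^{2\beta}), \qquad n_1 + n_2 \le n + s, \qquad n_i \le c n + s.
\]
Since $c^{1+\beta} + (1-c)^{1+\beta} < 1$ for $c \in (0,1)$ and $\beta > 0$, and since $2\beta < 1+\beta$, a standard induction resolves this recurrence to $T(n) = O(n^{1+\beta})$. The main obstacle is designing the coupling between the auxiliary connectivity variables and the local forest polytopes on $G_1, G_2$ so that the projection onto the $x$-variables is exactly $\FP(G)$ while the gluing cost stays at $O(s^2)$; this is precisely the content of the paper's direct construction (or, alternatively, of its communication-protocol based factorisation of the slack matrix).
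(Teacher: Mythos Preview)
Your reduction of Theorem~\ref{minorclosed} to the general separator statement via the Alon--Seymour--Thomas theorem is correct and is exactly what the paper does.

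The gap is in the gluing step, and your description of what the paper does there is not accurate. The paper does \emph{not} keep the separator $X$ in both sides, and it does \emph{not} introduce any pairwise connectivity variables on $X$. Instead it takes $G_1,G_2$ to be the two parts of $G-X$ (so $X$ is in neither), and handles $X$ by removing its vertices one by one via a vertex-deletion lemma (Lemma~\ref{lem:vtx_deletion}): using Martin-style arc variables $z_{(v,w)}$ one shows $\xc(\FP(G)) \le \xc(\FP(G-r)) + 2(|E(G)|+|V(G)|)$ for any vertex $r$. Iterating over $r\in X$ gives Lemma~\ref{lem:separator}, with gluing cost $O(|X|\cdot n)=O(n^{1+\beta})$ per level, and the recurrence $f(n)\le 2f(\lfloor n/2\rfloor)+O(n^{1+\beta})$ then solves to $O(n^{1+\beta})$ by the Master Theorem. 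No connectivity information about pairs in $X$ is ever encoded.

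Your alternative---auxiliary variables $y^i_{uv}$ recording whether $u,v\in S$ lie in the same tree of $F\cap E(G_i)$, coupled to the local forest polytopes, plus a forest constraint on the resulting multigraph on $S$---is a genuinely different idea, and you rightly flag it as the main obstacle. But you then resolve it by pointing to ``the paper's direct construction'', which does no such thing. It is not at all clear that the predicate ``$u$ and $v$ are in the same component of the forest $F\cap E(G_i)$'' can be linearly coupled to $\FP(G_i)$ with only $O(s^2)$ inequalities in total; this is a nontrivial claim that would need its own proof. (There is also a smaller issue: since you place $S$ in both $G_1$ and $G_2$, edges of $G[S]$ are double-counted and your forest characterisation needs adjustment; and the recurrence with $n_1+n_2=n+s$ rather than $n$ requires a bit more care than you give it.) As written, the proposal defers precisely the step that carries the content of the theorem.
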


In fact, we prove a stronger theorem for all graph classes with strongly sublinear separators (precise definitions will be given later).  

\begin{theorem} \label{main}
Let $\GG$ be a graph class closed under induced subgraphs, and $\beta \in (0,1)$ be such that every $n$-vertex graph in $\GG$ has a $\frac{1}{2}$-balanced separator of size $O(n^{\beta})$.  Then the forest polytope of every   $n$-vertex graph in $\GG$ has extension complexity in $O(n^{1+\beta})$.
\end{theorem}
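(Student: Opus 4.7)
The plan is to prove Theorem~\ref{main} by strong induction on $n$, using a recursive decomposition of $G$ via balanced separators. Given $G \in \GG$ with $n$ vertices, apply the separator hypothesis to obtain $S \subseteq V(G)$ with $|S| = s = O(n^\beta)$ such that every component of $G - S$ has at most $n/2$ vertices. Greedily assigning each component to the currently smaller group yields a partition of $V(G) \setminus S$ into two classes $A$, $B$ with $|A|, |B| \leq 3n/4$ (using that the imbalance is bounded by the largest component size $\leq n/2$) and with no edges between $A$ and $B$. Set $G_A := G[A \cup S]$ and $G_B := G[B \cup S]$; both lie in $\GG$ (as induced subgraphs) and both have at most $3n/4 + O(n^\beta)$ vertices, so induction supplies extended formulations of $\FP(G_A)$ and $\FP(G_B)$ of sizes $O(|V(G_A)|^{1+\beta})$ and $O(|V(G_B)|^{1+\beta})$.

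The heart of the argument is a gluing lemma: given those two inductive formulations, build one for $\FP(G)$ by identifying the edge variables on $E(G[S])$ and adding $O(n \cdot s) = O(n^{1+\beta})$ further variables and inequalities. The combinatorial fact being encoded is that a subgraph $F$ of $G$ is a forest if and only if $F_A := F \cap E(G_A)$ and $F_B := F \cap E(G_B)$ are forests of $G_A, G_B$ (which is handled by the two inductive formulations), and $F_A \cup F_B$ contains no cycle crossing $S$. The remaining condition can be enforced linearly by tracking, for each vertex $v \in V(G)$ and each separator vertex $t \in S$, an auxiliary flow-style variable indicating whether $v$ lies in the component of $t$ on the relevant side; forcing these variables to match across $S$ reduces global acyclicity to a local condition on the separator. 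The total count of such auxiliary variables and constraints is $O(n|S|) = O(n^{1+\beta})$, as required.

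Writing $f(n)$ for the maximum extension complexity over $n$-vertex graphs in $\GG$, the recursion reads $f(n) \leq f(n_A) + f(n_B) + O(n^{1+\beta})$ with $n_A, n_B \leq 3n/4 + O(n^\beta)$ and $n_A + n_B \leq n + O(n^\beta)$. Since $3/4 < 1$, a standard convexity estimate (e.g.\ $n_A^{1+\beta} + n_B^{1+\beta} \leq (n_A + n_B)\max(n_A,n_B)^{\beta} \leq (3/4)^{\beta} n^{1+\beta}(1+o(1))$) gives $n_A^{1+\beta} + n_B^{1+\beta} \leq \rho \cdot n^{1+\beta}$ for a fixed $\rho < 1$, so the overhead is absorbed and the recurrence solves to $f(n) = O(n^{1+\beta})$. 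The main obstacle is the gluing lemma itself: we must produce an explicit linear description of $\FP(G)$ as a projection built from the two inductive formulations, enforcing acyclicity across $S$ with only $O(n^{1+\beta})$ auxiliary constraints and no spurious fractional extreme points arising at the separator. Constructing such a description and proving it projects exactly to $\FP(G)$ is the polyhedral core of the proof; any slack here would break the target bound.
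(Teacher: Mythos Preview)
Your recursive separator strategy matches the paper's at a high level, but there is a genuine gap at exactly the point you flag as the ``polyhedral core'': the gluing lemma is asserted, not proved. You write down the combinatorial condition (``$F_A$, $F_B$ forests, plus no cycle crossing $S$'') and gesture at ``flow-style variables'' indexed by $V(G)\times S$, but you never specify the variables, the constraints, or---crucially---a proof that the resulting polytope projects \emph{exactly} onto $\FP(G)$. ``Tracking which separator component a vertex lies in'' is not a linear condition on its face, and your sketch gives no mechanism preventing fractional points that satisfy both inductive formulations yet violate a rank inequality $x(E(U))\le |U|-1$ for some $U$ straddling $A\cup S$ and $B\cup S$. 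Since you yourself identify this as the main obstacle, the proposal as written is a plan, not a proof.

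The paper avoids this difficulty by a small but decisive change in the decomposition: instead of putting $S$ into \emph{both} recursive pieces, it \emph{removes} $S$ entirely and recurses on the vertex-disjoint graphs $G_1:=G[A]$ and $G_2:=G[B]$. Then the gluing is trivial, because $\FP(G-S)=\FP(G_1)\times\FP(G_2)$ (Lemma~\ref{lem:2conn}). The price is that one must add $S$ back, and this is handled by an explicit single-vertex lemma (Lemma~\ref{lem:vtx_deletion}): for any vertex $r$, one has
\[
\FP(G)=\bigl(\FP(G-r)\times\R^{\delta(r)}\bigr)\cap Q(G),
\]
where $Q(G)$ is defined by $2(|E(G)|+|V(G)|)$ Martin-style parent-pointer constraints, and the identity is verified directly against Edmonds' rank inequalities. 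Iterating over the vertices of $S$ yields
\[
\xc(\FP(G))\le \xc(\FP(G_1))+\xc(\FP(G_2))+2|S|\,(|E(G)|+|V(G)|),
\]
which feeds the recursion $f(n)\le 2f(\lfloor n/2\rfloor)+O(n^{1+\beta})$ and solves to $O(n^{1+\beta})$.

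In short: by duplicating $S$ you have traded a trivial product step for a nontrivial gluing lemma that you have not supplied. The paper's insight is that removing $S$ and paying $O(n)$ per separator vertex via an explicit, provably exact Martin-type formulation is both cheaper to analyse and---unlike your sketch---comes with a correctness proof. If you want to salvage your route, you would need to write down your auxiliary system and prove an equality analogous to~\eqref{eq:vtx_deletion}; the paper's Lemma~\ref{lem:vtx_deletion} shows what such a proof looks like and suggests that deleting $S$ first is the cleaner path.
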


By a result of \citet{AST90}, for proper minor-closed classes, we may take $\beta=\frac{1}{2}$ in Theorem~\ref{main}. Therefore, Theorem~\ref{main} immediately implies Theorem~\ref{minorclosed}. Moreover, Theorem~\ref{main} is applicable for many graph classes that are not minor-closed and of independent interest. Indeed, by a result of \citet{DN16}, Theorem~\ref{main} is applicable for any graph class with polynomial expansion.  For all such graph classes, Theorem~\ref{main} gives better bounds on $\xc(\STP(G))$ than the $O(n^2)$ bound following from~\cite{Wong80, Martin91}.  These connections and examples are presented in Section~\ref{sec:sublinear}.

We give two proofs of Theorem~\ref{main}. The first proof directly constructs the extended formulation and is very short; see Section~\ref{DirectProof}. The second proof exploits an equivalent definition of extension complexity using randomized protocols from communication complexity~\cite{FFGT15}.  We describe this equivalence in Section~\ref{sec:protocols}, where we also present the classical protocol for the forest polytope of general graphs~\cite{FFGT15}. In Section~\ref{sec:proof}, we present the alternative proof of our main theorem via a randomized protocol. Note that we in fact found the latter proof first, and then derived the  direct proof from it. We believe that the proof via randomized protocols offers an alternative perspective of independent interest. Furthermore, the framework of randomized protocols might turn out to be helpful when attacking some of the remaining open problems in the area.  In this spirit, we present an alternative and short proof via randomized protocols of the result by  \citet{Williams2002} showing that $\xc(\STP(G)) \in O(n)$ for every connected $n$-vertex planar graph $G$; see Section~\ref{sec:planar}. Finally, we conclude in Section~\ref{sec:open_problems} with some open problems.

\section{Sublinear Separators and Polynomial Expansion} 
\label{sec:sublinear}

Let $\alpha \in (0, 1)$.  An \defn{$\alpha$-balanced separator} in a graph $G$ is a set $X\subseteq V(G)$ such that $G-X$ is the disjoint union of two induced subgraphs $G_1$ and $G_2$ with $|V(G_1)|, |V(G_2)| \leq \alpha|V(G)|$.  A graph class $\GG$ has \defn{strongly sublinear separators} if there exist $\beta\in(0,1)$ and $c \in \mathbb{R}$ such that every graph $G\in \GG$ has a $\frac{1}{2}$-balanced separator of size at most $c|V(G)|^{\beta}$. This definition is typically stated with  $\frac{2}{3}$ instead of $\frac{1}{2}$, but the following folklore lemma shows that our definition is equivalent (up to a constant factor on the size of the separator). For completeness we include a proof. 

\begin{lemma}
\label{sublinear}
Let $\GG$ be a graph class closed under induced subgraphs such that for some constants $\alpha,\beta\in(0,1)$ and $c>0$, every graph $G\in\GG$ has an $\alpha$-balanced separator of size at most $c\,|V(G)|^\beta$. Then every graph $G\in\GG$ has a $\frac{1}{2}$-balanced separator of size at most $\frac{c}{1-\alpha}\,|V(G)|^\beta$. 
\end{lemma}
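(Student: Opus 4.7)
The plan is an induction on $n=|V(G)|$, with trivial base case. For the inductive step, first apply the hypothesis to $G$ itself to obtain an $\alpha$-balanced separator $X_0\subseteq V(G)$ of size at most $cn^\beta$, together with the accompanying partition $V(G)\setminus X_0 = V_1 \sqcup V_2$ satisfying $|V_i|\le \alpha n$. If both parts already satisfy $|V_1|,|V_2|\le n/2$, then $X_0$ is itself a $\tfrac{1}{2}$-balanced separator of size at most $cn^\beta\le\tfrac{c}{1-\alpha}n^\beta$ and there is nothing more to do.

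Otherwise, without loss of generality $|V_1|>n/2$, which forces $|V_2|<n/2$ since $|V_1|+|V_2|\le n$. Since $\mathcal{G}$ is closed under induced subgraphs and $|V_1|\le\alpha n<n$, the induction hypothesis applied to $G[V_1]\in\mathcal{G}$ yields a $\tfrac{1}{2}$-balanced separator $X_1\subseteq V_1$ of $G[V_1]$ of size at most $\tfrac{c}{1-\alpha}|V_1|^\beta$ with partition $V_1\setminus X_1 = W_1\sqcup W_2$, $|W_j|\le|V_1|/2$. I would then set $X:=X_0\cup X_1$ and distribute the connected components of $V_2$ (each of size at most $|V_2|<n/2$) between the two pieces $W_1$ and $W_2$---greedily adding each component to whichever side is lighter, and, if $V_2$ has a single dominant component, first invoking the induction hypothesis on $G[V_2]$ as well and pairing the resulting halves with $W_1,W_2$---so that each side ends up with total size at most $|V_1|/2+|V_2|/2\le n/2$, producing a valid $\tfrac{1}{2}$-balanced separator of $G$.

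The step I expect to be the main obstacle is this distribution of $V_2$'s components, since in general a collection of sets of sizes at most $n/2$ summing to at most $n$ need not admit a partition into two bins of capacity $n/2$ (take three sets of size $2n/5,2n/5,n/6$). The recursive structure overcomes this: either one recurses on $V_2$ as well so that its halves pair naturally with $W_1,W_2$, or one uses that $V_2$ is already a union of several components whose sizes are flexible enough for a greedy assignment to succeed. Once the partition is in hand, the size bound $|X|\le \tfrac{c}{1-\alpha}n^\beta$ is obtained by plugging $|V_1|\le\alpha n$ (and $|V_2|\le\alpha n$ if the second recursion is used) into the inductive bound on $|X_1|$ (and $|X_2|$), reducing to a short algebraic verification involving the constraint $|V_1|+|V_2|\le n$ and the choice of constant $\tfrac{c}{1-\alpha}$.
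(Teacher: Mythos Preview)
Your inductive scheme has two genuine gaps, one arithmetic and one structural.

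\emph{The size recursion does not close.} Plugging $|V_1|\le\alpha n$ into the inductive bound $|X_1|\le\frac{c}{1-\alpha}|V_1|^\beta$ yields $|X_1|\le\frac{c\,\alpha^\beta}{1-\alpha}n^\beta$, so
\[
|X_0|+|X_1|\ \le\ c\,n^\beta+\frac{c\,\alpha^\beta}{1-\alpha}\,n^\beta\ =\ \frac{c\,(1-\alpha+\alpha^\beta)}{1-\alpha}\,n^\beta,
\]
and this is $\le\frac{c}{1-\alpha}n^\beta$ only if $\alpha^\beta\le\alpha$, which is \emph{false} for $\beta\in(0,1)$. So the ``short algebraic verification'' you defer to the end actually fails. (At best this kind of one-sided recursion yields the constant $\frac{c}{1-\alpha^\beta}$, not $\frac{c}{1-\alpha}$.)

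\emph{The packing step cannot be rescued within your framework.} You correctly flag the distribution of $V_2$ as the obstacle, but the proposed fixes do not work. If $G[V_2]$ is connected, it must go entirely to one side; then you need $|W_j|+|V_2|\le n/2$ for some $j$, i.e.\ $|V_2|\le n/2-|V_1|/2=(n-|V_1|)/2$, which together with $|V_1|+|V_2|+|X_0|=n$ forces $|V_2|\le|X_0|$ --- a condition that fails whenever $|V_2|$ is not tiny. Your fallback of also recursing on $V_2$ does give a valid $\tfrac12$-balanced partition (pair the halves of $V_1$ with the halves of $V_2$), but then the size bound becomes
\[
|X|\ \le\ c\,n^\beta+C\bigl(|V_1|^\beta+|V_2|^\beta\bigr),
\]
and since $|V_1|^\beta+|V_2|^\beta$ can be as large as $2^{1-\beta}n^\beta>n^\beta$ subject to $|V_1|+|V_2|\le n$, there is \emph{no} constant $C$ for which $c+C\cdot 2^{1-\beta}\le C$. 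So the two-sided recursion blows up rather than closes.

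The paper's proof takes a different route that sidesteps the packing problem entirely. It first uses the very same one-sided recursion, but to show that $G$ has a \emph{path decomposition} whose bags all have size at most $\frac{c}{1-\alpha}|V(G)|^\beta$ (here the geometric series in $\alpha^\beta$ is what determines the constant). Then it refines the sequence of bags so that consecutive bags differ by at most one vertex, and applies an intermediate-value argument to the quantity $|X_i|-|Y_i|$ (where $X_i,Y_i$ are the parts of $G-B_i$ to the left and right of bag $B_i$) to locate a single bag $B_i$ with $\bigl||X_i|-|Y_i|\bigr|\le1$. That bag is the desired $\tfrac12$-balanced separator. No packing of small pieces is ever needed, because the path structure already orders all the pieces linearly.
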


\begin{proof}
We first prove by induction on $|V(G)|$ that each $G\in \GG$ has pathwidth at most 
$\frac{c}{1-\alpha} |V(G)|^\beta-1$. That is, there is a sequence $B_1,\dots,B_n$ of non-empty subsets of $V(G)$ such that:
\begin{enumerate}[(1),itemsep=0ex,topsep=0ex]
\item $B_1\cup \dots\cup B_n=V(G)$, 
\item if $1\leq i<j<k\leq n$ then $ B_i\cap B_k \subseteq B_j$,  
\item for each edge $vw$ of $G$, there exists $i\in[n]$ such that $v,w\in B_i$, and
\item $|B_i| \leq \frac{c}{1-\alpha} |V(G)|^\beta$ for each $i\in[n]$. 
\end{enumerate}
If $|V(G)|=1$ then $B_1:=V(G)$ is the desired sequence. Now assume that $|V(G)|\geq 2$. By assumption, there is a set $X\subseteq V(G)$ of size at most $c|V(G)|^\beta$ such that $G-X$ is the disjoint union of two induced subgraphs $G_1$ and $G_2$ with $|V(G_1)|, |V(G_2)| \leq \alpha |V(G)|$. By induction, $G_1$ has the desired sequence $A_1,\dots,A_a$ and $G_2$ has the desired sequence $B_1,\dots,B_b$. The sequence $A_1\cup X,\dots,A_a\cup X,B_1\cup X,\dots,B_b\cup X$ satisfies (1)--(3) by construction, and it satisfies (4) since
$$|A_j\cup X|,|B_j\cup X| 
\leq \tfrac{c}{1-\alpha} |V(G_i)|^\beta + c|V(G)|^\beta \leq 
(\tfrac{c\alpha}{1-\alpha} + c)|V(G)|^\beta =
\tfrac{c}{1-\alpha} |V(G)|^\beta.$$
Let $B_1,\dots,B_n$ be a sequence satisfying (1)--(4) for any $G\in\GG$. Suppose that there are distinct vertices $v,w\in B_{i} \setminus B_{i+1}$ for some $i\in[n-1]$. 
Then replace $B_i,B_{i+1}$ in the sequence by $B_i,B_i\setminus\{v\},B_{i+1}$. 
The new sequence satisfies (1)--(4). Repeat this operation (and the symmetric operation)
until we obtain a sequence $B_1,\dots,B_m$ that satisfies (1)--(4) and:
\begin{enumerate}[(5),itemsep=0ex,topsep=0ex]
\item $|B_{i} \setminus B_{i+1}|\leq 1$ and $|B_{i+1} \setminus B_{i}|\leq 1$ for each $i\in[m-1]$. 
\end{enumerate}
For each $i\in[m]$, let $X_i:=\bigcup\{B_j\setminus B_i:j<i\}$ and $Y_i:= \bigcup\{B_j\setminus B_i:j>i\}|\}$. So $X_i,B_i,Y_i$ is a partition of $V(G)$. By (2) and (3), no edge of $G$ has one endpoint in $X_i$ and the other endpoint in $Y_i$. Thus $G-B_i$ is the disjoint union of the induced subgraphs $G[X_i]$ and $G[Y_i]$.
For each $i\in[m-1]$, we have $X_{i}\subseteq X_{i+1}$ by property (2), and 
$|X_{i+1}|\leq |X_{i}|+1$ by property (5). So $|X_1|,\dots,|X_m|$ is a non-decreasing sequence from $0$ to $|X_m|$ increasing by at most 1 at each step. By symmetry, $|Y_1|,\dots,|Y_m|$ is a non-increasing sequence from $|Y_1|$ to $0$ decreasing by at most 1 at each step.
Consider the piecewise linear continuous function defined by  $f(i):=|X_i|-|Y_i|$. By the Intermediate Value Theorem, $||X_i|-|Y_i||\leq 1$ for some $i\in[m]$. Thus $G-B_i$ is the disjoint union of $G_1:=G[X_i]$ and $G_2:=G[Y_i]$, where $|V(G_1)|,|V(G_2)|\leq\frac12 |V(G)|$ (since $B_i\neq\emptyset$).
\end{proof}

Graph classes with strongly sublinear separators are characterised  via the notion of polynomial expansion, due to \citet{Sparsity}.  The \defn{density} of a non-empty graph $G$ is $\frac{|E(G)|}{|V(G)|}$.  A graph class $\GG$ has \defn{bounded expansion} with \defn{expansion function} $f:\mathbb{Z}^+\to\mathbb{R}$ if, for every graph $G\in\GG$ and for all pairwise-disjoint subgraphs $B_1,\dots,B_t$ of radius at most $r$ in $G$, the graph obtained from $G$ by contracting each $B_i$ into a vertex has density  at most $f(r)$. When $f(r)$ is a constant, $\GG$ is contained in a proper minor-closed class. As $f(r)$ is allowed to grow with $r$ we obtain larger and larger graph classes.  A graph class $\GG$ has \defn{polynomial expansion} if $\GG$ has bounded expansion with an expansion function in $O(r^c)$, for some constant $c$. 
\citet{DN16} characterised graph classes with polynomial expansion as follows. 

\begin{theorem}[\cite{DN16}] \label{expansion}
A graph class $\GG$ closed under induced subgraphs has strongly sublinear separators if and only if $\GG$ has polynomial expansion.  
\end{theorem}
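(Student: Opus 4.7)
The plan is to prove the two implications separately, invoking the characterization of polynomial expansion in terms of shallow minor density: a class $\GG$ has polynomial expansion iff for every $r$, every depth-$r$ shallow minor of every $G\in\GG$ has density polynomial in $r$.

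For the direction (sublinear separators $\Rightarrow$ polynomial expansion), the key step is to show that the property of having strongly sublinear separators is essentially inherited by shallow minors with only a polynomial loss in the constants. Given $G\in\GG$ and a depth-$r$ shallow minor $H$ with branch sets $B_1,\dots,B_t$, I would start from a $\frac{1}{2}$-balanced separator $X$ of $G$ and build a balanced separator of $H$ by taking the branch sets that meet $X$, together with a small additional set chosen to rebalance the two sides (at the vertex-of-$H$ level rather than vertex-of-$G$ level). The radius bound on the $B_i$ lets one keep the blow-up polynomial in $r$. Once every shallow minor $H$ is shown to have $\frac{1}{2}$-balanced separators of size sublinear in $|V(H)|$, a standard recursion argument on separators shows that such $H$ have polynomially bounded edge count, which is exactly polynomial expansion of $\GG$.

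For the direction (polynomial expansion $\Rightarrow$ sublinear separators), which is the main technical content of Dvořák and Norin, the plan is to construct a small balanced separator by an iterative procedure. Given $G\in\GG$, fix a vertex $v$ and consider a BFS decomposition into layers $L_0,L_1,\ldots$. Either some middle layer is already sublinear in size, in which case it is a balanced separator; or many consecutive layers are all ``thick,'' in which case by contracting balls around carefully chosen centers one extracts a shallow minor whose density exceeds the value forced by polynomial expansion, a contradiction. The dependence between the exponent $c$ of the expansion function and the separator exponent $\beta$ comes out of the recursion quantitatively. An equivalent (and perhaps cleaner) route goes through the generalized coloring numbers $\mathrm{wcol}_r$, which are known to be polynomially bounded in $r$ exactly for classes of polynomial expansion, and from which balanced separators of sublinear size can be extracted directly.

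The main obstacle is the ``polynomial expansion $\Rightarrow$ sublinear separators'' direction. The difficulty is simultaneously controlling (i) the sizes of the BFS layers used to produce a candidate separator, (ii) the density of the shallow minor obtained by contracting paths through thick layers, and (iii) the recursive balance between the two components left after removing the separator, while tracking how the polynomial degree in the expansion function propagates through the iteration to yield a genuinely sublinear separator exponent $\beta<1$. The other direction, by contrast, is mostly a careful bookkeeping exercise once one knows that strongly sublinear separators survive the passage to shallow minors.
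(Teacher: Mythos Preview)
The paper does not prove this theorem at all: it is stated as a result of Dvo\v{r}\'{a}k and Norin and cited from~\cite{DN16} without proof. There is therefore nothing to compare your proposal against in the paper itself.

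Your sketch is a plausible outline of the actual argument in~\cite{DN16}, and the identification of the hard direction (polynomial expansion $\Rightarrow$ strongly sublinear separators) is correct. But since the paper treats this as a black-box citation, any attempt to supply a proof here goes well beyond what the authors do, and you should simply cite the result rather than try to reprove it.
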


Theorems~\ref{main} and \ref{expansion} imply:

\begin{corollary}
For every graph class $\GG$ closed under induced subgraphs and with polynomial expansion, there exists $\beta\in(0,1)$ such that the forest polytope of every $n$-vertex   graph in $\GG$ has extension complexity in $O(n^{1+\beta})$.
\end{corollary}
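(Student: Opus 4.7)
The plan is very short: this corollary is essentially immediate from the combination of Theorem~\ref{main} and Theorem~\ref{expansion}, which have just been stated. The only content is to check that the hypotheses of Theorem~\ref{main} are met.

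First, I would apply Theorem~\ref{expansion} to $\GG$. By hypothesis, $\GG$ is closed under induced subgraphs and has polynomial expansion, so Theorem~\ref{expansion} yields that $\GG$ has strongly sublinear separators. Unpacking the definition from Section~\ref{sec:sublinear}, this gives constants $\beta \in (0,1)$ and $c > 0$ such that every graph $G \in \GG$ has a $\tfrac{1}{2}$-balanced separator of size at most $c\,|V(G)|^\beta$. In other words, every $n$-vertex graph in $\GG$ has a $\tfrac{1}{2}$-balanced separator of size $O(n^\beta)$.

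Second, with this value of $\beta$ in hand, I would feed the class $\GG$ and the constant $\beta$ into Theorem~\ref{main}. All hypotheses of Theorem~\ref{main} are now verified: $\GG$ is closed under induced subgraphs, and $n$-vertex graphs in $\GG$ admit $\tfrac{1}{2}$-balanced separators of size $O(n^\beta)$. The conclusion of Theorem~\ref{main} then gives that the forest polytope of every $n$-vertex graph in $\GG$ has extension complexity in $O(n^{1+\beta})$, which is exactly the statement of the corollary.

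Since this is just a two-line chaining of previously stated results, there is no genuine obstacle. The only thing to be a bit careful about is quantifier order: one should first extract $\beta$ from Theorem~\ref{expansion} (it depends on $\GG$) and then apply Theorem~\ref{main} with that specific $\beta$, rather than trying to universally quantify $\beta$. Everything else, including the implicit constant in the $O(n^{1+\beta})$ bound, is absorbed by the constants supplied by the two cited theorems.
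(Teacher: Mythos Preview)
Your proposal is correct and mirrors the paper's approach exactly: the paper simply states that the corollary is implied by Theorems~\ref{main} and \ref{expansion}, and your argument is precisely the two-step chaining (first extract $\beta$ via Theorem~\ref{expansion}, then apply Theorem~\ref{main}) that makes this implication explicit.
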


We in fact prove the following more precise version of Theorem~\ref{main}.

\begin{theorem} \label{thm:main2}
Let $\GG$ be a graph class closed under induced subgraphs such that for some  $c,d \in \mathbb{R}^+$ and $\beta\in(0,1)$,  every $n$-vertex graph in $\GG$ has density at most $d$ and has a $\frac{1}{2}$-balanced separator of size at most $cn^{\beta}$. Then the forest polytope of every $n$-vertex    graph in $\GG$ has extension complexity in $O(cdn^{1+\beta})$.
\end{theorem}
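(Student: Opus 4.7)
The plan is to prove the bound by induction on $n$, using the separator hypothesis to recursively decompose $G$ and gluing the resulting extended formulations. Given $G\in\GG$ with $n$ vertices, I would first invoke the hypothesis to obtain a $\tfrac{1}{2}$-balanced separator $X\subseteq V(G)$ with $|X|\le cn^\beta$, so that $G-X$ splits into induced subgraphs $G_1,G_2$ each on at most $n/2$ vertices. Set $G_i':=G[V(G_i)\cup X]$; each $G_i'$ lies in $\GG$, has density at most $d$, and has at most $n/2+cn^\beta$ vertices. By the induction hypothesis, $\FP(G_i')$ admits an extended formulation of size $T(n/2+cn^\beta)$, where $T(n)$ denotes the maximum extension complexity to be bounded.

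The main construction glues the two recursive formulations along $X$. Any forest $F$ of $G$ is determined by the pair $(F_1,F_2):=(F\cap E(G_1'),F\cap E(G_2'))$, where each $F_i$ is a forest of $G_i'$ and the two agree on $E(G[X])$; conversely, such a pair need not yield a forest of $G$, since a cycle may form by alternating across $X$. To enforce acyclicity of the combined edge set $F_1\cup F_2$, I plan to add a flow-based block in the spirit of \citet{Wong80} and \citet{Martin91}, with one commodity per vertex of $X$: for each $r\in X$ and each edge $e\in E(G)$, introduce a directed flow variable together with standard flow-conservation and capacity constraints, designed so that the combined system is feasible precisely when the pair $(F_1,F_2)$ glues to a forest of $G$. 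This contributes $O(|X|\cdot|E(G)|)=O(cd\,n^{1+\beta})$ inequalities and, together with the two recursive blocks, gives an exact extended formulation of $\FP(G)$.

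The construction yields the recurrence
\[
T(n)\ \le\ 2\,T\bigl(n/2+cn^\beta\bigr)\ +\ K\,cd\,n^{1+\beta}
\]
for an absolute constant $K$. Since $2\cdot(1/2)^{1+\beta}=2^{-\beta}<1$ when $\beta\in(0,1)$, unrolling the recurrence (and absorbing the $cn^\beta$ perturbation on subproblem sizes into a constant-factor slack in a suitably strengthened induction hypothesis) gives $T(n)=O(cd\,n^{1+\beta})$. A trivial base case at constant $n$ is handled by the $O(|V|\cdot|E|)$ bound of \citet{Wong80} and \citet{Martin91}.

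The main obstacle is the gluing step: designing the $O(|X|\cdot|E(G)|)$-size block of constraints so that it precisely forces $F_1\cup F_2$ to be a forest of $G$. Naively enumerating the connectivity pattern that $F$ induces on $X$ is hopeless, since the number of such set partitions is super-polynomial in $|X|$; the flow formulation with $|X|$ commodities is the key device that replaces this enumeration with a polynomial-size rank certificate, and checking that it is both sound and complete for the forest condition is the technical heart of the proof.
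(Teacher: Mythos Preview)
Your high-level plan---balanced-separator recursion with a Martin-style flow block to handle the separator---is the paper's approach. Two points.

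First, including $X$ in both recursive subproblems is an unnecessary detour. The paper recurses on $G_1$ and $G_2$ (which do \emph{not} contain $X$), yielding the clean recurrence $f(n)\le 2f(\lfloor n/2\rfloor)+O(cdn^{1+\beta})$. Once the flow block certifies every rank inequality $x(E(U))\le|U|-1$ with $U\cap X\ne\emptyset$, the remaining sets $U$ are disjoint from $X$ and hence split across $G_1, G_2$ with $E(U)=E(U_1)\sqcup E(U_2)$; their rank inequalities follow from those of $\FP(G_1)$ and $\FP(G_2)$ alone. There is no need for $\FP(G_1')$ or $\FP(G_2')$, and avoiding them spares you the perturbed recurrence.

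Second---and this is the genuine gap---your gluing step is phrased at the integer level (``feasible precisely when $(F_1,F_2)$ glues to a forest''), but what must be established is an identity of \emph{polytopes}: that the projection of your system equals $\FP(G)$ for fractional $x$. The paper isolates this as a one-vertex lemma: for any $r\in V(G)$,
\[
\FP(G)\;=\;\bigl(\FP(G-r)\times\R^{\delta(r)}\bigr)\cap Q(G),
\]
where $Q(G)$ is a single-root Martin polytope with $2(|E(G)|+|V(G)|)$ inequalities (over $G^+$, the graph $G$ plus a universal vertex). The nontrivial inclusion is proved not by reasoning about forests at all, but by deriving each rank inequality $x(E(U))\le|U|-1$ with $r\in U$ directly from the linear constraints of $Q(G)$. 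Iterating over all $r\in X$ and then using $\FP(G-X)=\FP(G_1)\times\FP(G_2)$ gives $\xc(\FP(G))\le\xc(\FP(G_1))+\xc(\FP(G_2))+2|X|(|E(G)|+|V(G)|)$. This polytope-level argument is precisely what you call ``the technical heart'' but do not supply; without it, the soundness direction of your construction---that no fractional point outside $\FP(G)$ survives---is unproven.
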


Theorem~\ref{thm:main2} implies Theorem~\ref{main} since graph classes admitting strongly sublinear separators have polynomial expansion by Theorem~\ref{expansion}, and hence have bounded density. 

We now present several examples of Theorem~\ref{thm:main2}.

\subsection*{Minor-closed classes} 
Let $G$ be an $n$-vertex   $K_t$-minor-free graph. Kostochka~\cite{kostochka1984lower} and Thomason~\cite{Thomason1984} independently proved that $G$ has density $O(t (\log t)^{1/2})$.  Kawarabayashi and Reed~\cite{KR10} proved that $G$ has a $\frac{2}{3}$-balanced separator of size $O(tn^{1/2})$ (improving on the original $O(t^{3/2}n^{1/2})$ bound of Alon, Seymour and Thomas~\cite{AST90}). By Lemma~\ref{sublinear}, for $K_t$-minor-free graphs, we may take $c=O(t)$, $d=O(t (\log t)^{1/2})$, and $\beta=\frac{1}{2}$ in Theorem~\ref{thm:main2}, which gives the following more precise version of Corollary~\ref{minorclosed}.

\begin{corollary}
\label{minorclosed2}
The forest polytope of every $n$-vertex   $K_t$-minor-free graph has extension complexity in $O(t^2 (\log t)^{1/2} n^{3/2})$.  
\end{corollary}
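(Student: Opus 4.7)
The plan is simply to instantiate Theorem~\ref{thm:main2} with the parameters supplied by the three classical results on $K_t$-minor-free graphs already recalled in the paragraph preceding the corollary. Since the class of $K_t$-minor-free graphs is minor-closed, it is in particular closed under induced subgraphs, so Theorem~\ref{thm:main2} is applicable once we identify the density bound $d$, the separator constant $c$, and the exponent $\beta$.

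First I would extract the density bound: by Kostochka~\cite{kostochka1984lower} and Thomason~\cite{Thomason1984}, every $K_t$-minor-free graph has density at most $d = O(t(\log t)^{1/2})$, and this property passes to all induced subgraphs (indeed all minors). Next, for the separator parameters, Kawarabayashi and Reed~\cite{KR10} provide a $\frac{2}{3}$-balanced separator of size $O(tn^{1/2})$ in every $n$-vertex $K_t$-minor-free graph. To convert this to a $\frac{1}{2}$-balanced separator (as required by the hypothesis of Theorem~\ref{thm:main2}), I would invoke Lemma~\ref{sublinear} with $\alpha=\tfrac{2}{3}$ and $\beta=\tfrac{1}{2}$, which inflates the size only by a constant factor, yielding a $\frac{1}{2}$-balanced separator of size at most $cn^{1/2}$ with $c = O(t)$.

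With these choices, Theorem~\ref{thm:main2} gives an extension complexity bound of
\[
O(cd \cdot n^{1+\beta}) \;=\; O\bigl(t \cdot t(\log t)^{1/2} \cdot n^{3/2}\bigr) \;=\; O\bigl(t^2 (\log t)^{1/2}\, n^{3/2}\bigr),
\]
which is exactly the claimed bound. There is no substantive obstacle here: the corollary is a direct numerical specialisation of Theorem~\ref{thm:main2}, and the only care needed is to route the Kawarabayashi--Reed separator through Lemma~\ref{sublinear} so that the separator is $\frac{1}{2}$-balanced and the constant $c$ scales linearly in $t$ (rather than as $t^{3/2}$ from the older Alon--Seymour--Thomas bound, which would otherwise worsen the final constant in $t$).
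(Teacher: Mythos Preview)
Your proposal is correct and follows exactly the same route as the paper: the paragraph immediately preceding the corollary already derives it by plugging the Kostochka--Thomason density bound $d=O(t(\log t)^{1/2})$ and the Kawarabayashi--Reed separator bound (converted to a $\tfrac12$-balanced separator via Lemma~\ref{sublinear}, giving $c=O(t)$ and $\beta=\tfrac12$) into Theorem~\ref{thm:main2}. There is nothing to add.
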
 

\subsection*{Bounded genus}
Let $G$ be an $n$-vertex   graph of Euler genus $g$. Such graphs have balanced separators of size $O(\sqrt{gn})$ (see \cite{AS96,DMW17,GHT84}) and density less than $3+\frac{3g}{n}$ by Euler's formula. Thus Theorem~\ref{thm:main2} implies the forest polytope of $G$ has extension complexity in 
$O(g^{1/2}n^{3/2}+g^{3/2}n^{1/2})$, which matches the bound proved by Fiorini, Huynh, Joret and Pashkovich~\cite{FHJP17}. 

\subsection*{Bounded crossings}
A graph is \defn{$(g,k)$-planar} if it has a drawing in a surface with Euler genus $g$ with at most $k$ crossings on each edge. Note that the class of $(g, k)$-planar graphs is \emph{not} minor-closed, even in the $g=0$ and $k=1$ case~\cite{DEW17}.  However, every $n$-vertex $(g,k)$-planar graph has density  $O(gk)$ and has a balanced separator of size $O(\sqrt{gkn})$; see \cite{DEW17,DMW17}. By Theorem~\ref{thm:main2}, the forest polytope of such graphs has extension complexity in $O((gkn)^{3/2})$.


\subsection*{Intersection graphs of balls}
Let \defn{$\GG_{d,k}$} be the class of intersection graphs of a set of balls in $\mathbb{R}^d$, where each point in $\mathbb{R}^d$ is in at most $k$ of the balls. \citet{MTSTV97} showed that each $n$-vertex graph $G\in \GG_{k,d}$ has a $(1-\frac{1}{d+2})$-balanced separator of size $O(k^{1/d}\,n^{1-1/d})$ and has density at most $3^dk$. By Lemma~\ref{sublinear}, each $n$-vertex graph $G\in \GG_{k,d}$ has a $\frac{1}{2}$-balanced separator of size $O(k^{1/d}(d+2)\,n^{1-1/d})$. By Theorem~\ref{thm:main2}, the forest polytope of each $n$-vertex graph $G \in \GG_{d,k}$ has extension complexity in  $O(k^{1+1/d}(d+2)3^d\,n^{1-1/d})$. Numerous other  intersection graphs of certain geometric objects admit strongly sublinear separators~\cite{DMN,SW98,Lee16}; Theorem~\ref{thm:main2} is applicable in all these settings.

\section{Direct Proof}
\label{DirectProof}

In this section we give our first proof of Theorem~\ref{thm:main2}. We need Edmond's linear description of the forest polytope~\cite{Edmonds71}.

\begin{theorem}[\cite{Edmonds71}] \label{thm:LP2}
For every graph $G$,
\[
  \FP(G) = \begin{array}[t]{r@{\ }l}
    \big \{x\in \mathbb{R}^{E(G)}_{\geq 0} : 
    &x(E(U))\leq |U|-1, \; \forall\, U\subsetneq V(G),\, U\neq\emptyset \big \}\,.
  \end{array}
\]
\end{theorem}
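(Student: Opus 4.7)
Let $Q$ denote the polyhedron on the right-hand side. The inclusion $\FP(G) \subseteq Q$ is immediate: for every forest $T$ of $G$ and every non-empty $U \subseteq V(G)$, the set $T \cap E(U)$ is a forest in $G[U]$ and so has at most $|U|-1$ edges, while $\chi^T \geq 0$ trivially. For the reverse inclusion $Q \subseteq \FP(G)$, my plan is to show that every vertex $x^\ast$ of $Q$ is the characteristic vector of a forest, via an uncrossing argument followed by a total unimodularity argument.

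Let $Z := \{e \in E(G) : x^\ast_e = 0\}$ and $\mathcal T := \{U \subseteq V(G) : U \neq \emptyset,\ x^\ast(E(U)) = |U|-1\}$ record the tight constraints at $x^\ast$. The classical uncrossing lemma shows: for any $U_1, U_2 \in \mathcal T$ with $U_1 \cap U_2 \neq \emptyset$, both $U_1 \cup U_2$ and $U_1 \cap U_2$ belong to $\mathcal T$, and every edge joining $U_1 \setminus U_2$ to $U_2 \setminus U_1$ lies in $Z$; hence $\chi^{E(U_1)} + \chi^{E(U_2)}$ and $\chi^{E(U_1 \cup U_2)} + \chi^{E(U_1 \cap U_2)}$ agree on $E(G) \setminus Z$. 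Starting from any collection of $|E(G)|$ linearly independent tight constraints certifying $x^\ast$ as a vertex and iteratively replacing crossing pairs by their union and intersection, I would extract a laminar family $\mathcal L \subseteq \mathcal T$ such that $\{\chi^{E(U)} : U \in \mathcal L\} \cup \{\chi^{\{e\}} : e \in Z\}$ still spans $\R^{E(G)}$.

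For a laminar family the resulting constraint matrix is totally unimodular: listing the sets of $\mathcal L$ by a post-order traversal of the laminar forest turns each edge-column into a consecutive-ones column, so the matrix is a network matrix after elementary row operations. Integer right-hand sides then force $x^\ast \in \Z^{E(G)}$, and combined with $x^\ast \geq 0$ and the constraints $x^\ast(E(\{u,v\})) \leq 1$ for $uv \in E(G)$ this gives $x^\ast \in \{0,1\}^{E(G)}$. Setting $F := \{e : x^\ast_e = 1\}$, any cycle in $F$ with vertex set $U$ would contribute $|U|$ edges to $E(U)$, contradicting $x^\ast(E(U)) \leq |U|-1$; hence $F$ is a forest and $x^\ast = \chi^F \in \FP(G)$.

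The principal obstacle is executing the uncrossing cleanly while tracking how tight subset constraints interact with tight non-negativity constraints so that the rank of the tight-constraint system is preserved. A cleaner but less self-contained alternative---the route Edmonds~\cite{Edmonds71} in fact takes---is to invoke the matroid polytope theorem directly for the graphic matroid of $G$, whose rank function is $r(E(U)) = |U| - c(G[U])$ (with $c(G[U])$ the number of components of $G[U]$); the inequalities of Theorem~\ref{thm:LP2} are precisely the non-redundant rank inequalities, so the result drops out of the general matroid polyhedral description without any uncrossing.
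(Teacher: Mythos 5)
The paper does not prove this theorem; it is cited from Edmonds~\cite{Edmonds71}. Your second route --- invoking the matroid polytope theorem for the graphic matroid and pruning to the non-redundant rank inequalities --- is indeed Edmonds' argument and is fine. Your first, self-contained route (uncrossing plus total unimodularity) is also a standard and viable strategy, but one step as written is wrong. You claim that listing the sets of $\mathcal L$ in a post-order traversal of the laminar forest makes each \emph{column} of the constraint matrix consecutive-ones. The column indexed by an edge $e=uv$ is supported on the chain of sets in $\mathcal L$ containing both $u$ and $v$, i.e., the ancestors (in the laminar forest) of the smallest such set. Ancestor chains are not contiguous under a post-order traversal, nor under any vertex ordering of a tree that is not a path: take a root $R$ with children $A,B$, with $A$ having children $C,D$; the post-order is $C,D,A,B,R$, and the ancestors of $C$ sit at positions $1,3,5$. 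The correct move is the transposed observation: the family $\{E(U):U\in\mathcal L\}$ is itself laminar as a family of subsets of $E(G)$, so one can order the \emph{edges} (columns) by a depth-first traversal of \emph{that} laminar forest, making each \emph{row} consecutive-ones; interval matrices are totally unimodular. Alternatively, your remark that the matrix is a network matrix is correct on its own and needs no consecutive-ones detour: orient the laminar tree towards its root and let each edge $e$ correspond to the tree path from the smallest tight set containing $e$ up to the root.

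A small but real point worth flagging: your argument (both the submodularity step when $U_1\cup U_2=V(G)$, and the final no-cycle check when the putative cycle is Hamiltonian) needs the inequality $x(E(V(G)))\le|V(G)|-1$. As printed, the theorem quantifies only over $U\subsetneq V(G)$, which is actually too weak --- for $G=K_3$ the resulting polyhedron is the unit cube $[0,1]^3$, which strictly contains $\FP(K_3)$. The correct and standard formulation (see \cite[Theorem~40.5]{schrijver2003combinatorial}) allows $U\subseteq V(G)$, and that is the version your proof silently, and correctly, establishes.
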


In the above description, $x(F)$ denotes $\sum_{e \in F} x_e$ and $E(U)$ denotes the set of edges of $G$ with both ends in $U$.  

We start with a known ``decomposition'' result: whenever $G$ is disconnected, the forest polytope of $G$ is the Cartesian product of the forest polytopes of its components. The same holds for the spanning tree polytope of $G$. Actually, this holds more generally when we consider the blocks\footnote{Recall that a \defn{block} of a graph $G$ is an induced subgraph $H$ of $G$ such that $H$ is either $2$-connected or isomorphic to $K_1$ or $K_2$, and $H$ is inclusion-wise maximal with this property.} of $G$. We state the result in this more general form, which is used in Section~\ref{sec:planar}.

\begin{lemma}\label{lem:2conn}
For every graph $G$ with blocks $G_1, \dots, G_k$, 
$$\FP(G) = \FP(G_1) \times \cdots \times \FP(G_k)
\text{ and }
\STP(G)=\STP(G_1)\times \dots \times \STP(G_k).$$
\end{lemma}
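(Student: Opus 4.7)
The plan is to reduce both polytope decompositions to combinatorial decompositions of forests and spanning trees across the blocks, and then invoke the standard fact that $\conv(S_1 \times \cdots \times S_k) = \conv(S_1) \times \cdots \times \conv(S_k)$ whenever $S_i$ is a finite subset of $\R^{d_i}$. The key structural input is that $E(G) = \bigsqcup_{i=1}^{k} E(G_i)$ (distinct blocks share only cut vertices, not edges), so the ambient space splits as $\R^{E(G)} \cong \prod_i \R^{E(G_i)}$, together with the classical fact that every cycle of $G$ is contained in a single block (equivalently, two edges lie in a common cycle iff they lie in a common block).

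First I would handle the forest polytope. From the observation that cycles do not cross blocks, a subset $F \subseteq E(G)$ is a forest of $G$ if and only if $F \cap E(G_i)$ is a forest of $G_i$ for every $i$: the forward direction is immediate, and the backward direction uses that any hypothetical cycle in $F$ would be confined to some $E(G_i)$. Under the above identification of ambient spaces, the characteristic vectors of forests of $G$ are exactly the concatenations of characteristic vectors of forests of the $G_i$'s. Taking convex hulls and using the product identity gives $\FP(G) = \FP(G_1) \times \cdots \times \FP(G_k)$.

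For the spanning tree polytope, I would additionally establish that $T \subseteq E(G)$ is a spanning tree of $G$ if and only if $T \cap E(G_i)$ is a spanning tree of $G_i$ for each $i$. For the ``only if'' direction, acyclicity passes to each block, and for connectedness within $G_i$ one uses that any simple $u$-$w$ path in $G$ with $u,w \in V(G_i)$ lies entirely in $G_i$ (a direct consequence of block-cut tree structure), so the unique $u$-$w$ path in $T$ stays in $G_i$. Conversely, if each $T_i := T \cap E(G_i)$ is a spanning tree of $G_i$, then $T = \bigsqcup_i T_i$ is acyclic (a cycle would live in some block) and connected (via the block-cut tree), and the edge counts match because $\sum_i (|V(G_i)| - 1) = |V(G)| - 1$ by a standard block-cut tree count. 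The decomposition $\STP(G) = \STP(G_1) \times \cdots \times \STP(G_k)$ then follows exactly as for $\FP$.

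There is no real obstacle here: the statement is folklore and the task is simply to assemble the right block-cut facts and apply the convex-hull-commutes-with-products identity. The only mildly delicate point is the simple-path property used to witness connectedness of $T \cap E(G_i)$ inside each block; everything else is essentially bookkeeping.
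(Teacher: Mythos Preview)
Your proposal is correct and follows essentially the same approach as the paper: reduce to the combinatorial fact that forests (respectively, spanning trees) of $G$ are exactly the unions of forests (respectively, spanning trees) of the blocks, and then pass to convex hulls. The paper phrases this as an iterative two-piece decomposition (splitting along a cut vertex) while you handle all blocks simultaneously and are more explicit about the $\conv(S_1\times\cdots\times S_k)=\conv(S_1)\times\cdots\times\conv(S_k)$ step, but there is no substantive difference.
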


\begin{proof}
If $G$ is 2-connected, then there is nothing to prove. Otherwise, the lemma  follows by iteratively applying the following observation. Let $G_1$ and $G_2$ be induced subgraphs of $G$ with at most one vertex in common, and whose union is $G$. Then a subgraph $T$ of $G$ is a forest (respectively, spanning tree) of $G$ if and only if each $G_i$ has a forest (resp. spanning tree) $T_i$ with $T=T_1\cup T_2$. 
\end{proof}

Lemma~\ref{lem:vtx_deletion} below quantifies the change in extension complexity for the forest polytope when a vertex is deleted from the graph. The variables $z_{(v,w)}$ used in the definition of $Q(G)$ are identical to those of Martin's extended formulation for the spanning tree polytope~\cite{Martin91}, except that here it suffices to consider a single root. 

\begin{lemma} \label{lem:vtx_deletion}
Let $G$ be a graph, and let $r$ be an arbitrary vertex of $G$. Let $G^+$ denote the graph obtained from $G$ by adding one new vertex adjacent to every vertex of $G$, and let $A(G^+) := \{(v,w) : vw \in E(G^+)\}$. 
Let
$$
Q(G) := \big\{x \in \R^{E(G)} \mid \exists z \in \R^{A(G^+)}_{\geq 0} : \begin{array}[t]{l}
z_{(r,v)} = 0, \, \forall v \in N(r),\\
x_{vw} = z_{(v,w)} + z_{(w,v)}, \, \forall vw \in E(G),\\ \sum_{w \in N(v)} z_{(v,w)} = 1, \, \forall v \in V(G^+ - r)\big\},
\end{array}
$$
where the neighborhoods are computed in $G^+$. Then 
\begin{equation}
\label{eq:vtx_deletion}
\FP(G) = (\FP(G-r) \times \R^{\delta(r)}) \cap Q(G)\,.
\end{equation}
In particular, 
$$
\xc(\FP(G)) \leq \xc(\FP(G-r)) + 2 (|E(G)| + |V(G)|)\,.
$$
\end{lemma}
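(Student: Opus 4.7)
The plan is to prove the set equality by verifying both inclusions and then derive the extension complexity bound by counting the inequality constraints on each side. The intuition is that $Q(G)$ is essentially the Martin-style in-arborescence extended formulation of the spanning tree polytope of $G^+$ with root $r$, projected to $\R^{E(G)}$: the new vertex $s \in V(G^+) \setminus V(G)$ plays the role of a common sink so that every spanning forest of $G$ corresponds to a spanning tree of $G^+$ obtained by joining $s$ to one representative of each component. However, the arborescence formulation alone only certifies $x(E(U)) \leq |U| - 1$ for subsets $U$ containing the root, so intersecting with $\FP(G-r) \times \R^{\delta(r)}$ is needed to enforce the remaining subset inequalities.

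For the forward inclusion $\FP(G) \subseteq (\FP(G-r) \times \R^{\delta(r)}) \cap Q(G)$, by convexity it suffices to treat $x = \chi^F$ for a forest $F$ of $G$. Then $x|_{E(G-r)} = \chi^{F \setminus \delta(r)}$ is the characteristic vector of a forest in $G-r$, hence lies in $\FP(G-r)$. To witness $x \in Q(G)$, extend $F$ to a spanning tree $T$ of $G^+$ by adding, for each connected component $C$ of $F$, one edge $sv_C$ to a chosen representative $v_C \in C$; root $T$ at $r$ so that every $v \neq r$ has a unique parent $p(v)$ (the next vertex on the path from $v$ to $r$ in $T$), and set $z_{(v, p(v))} = 1$, $z_{(v, w)} = 0$ otherwise. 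The defining constraints of $Q(G)$ then hold by construction: $z \geq 0$, $z_{(r, v)} = 0$ because $r$ is the root, $\sum_{w \in N(v)} z_{(v, w)} = 1$ for every non-root vertex $v$ (including $s$), and for every $vw \in E(G)$ the quantity $z_{(v,w)} + z_{(w,v)}$ equals $1$ if $vw \in F$ and $0$ otherwise, matching $x_{vw}$.

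For the reverse inclusion, suppose $(x, z)$ satisfies the defining constraints of the right-hand side. By Theorem~\ref{thm:LP2} it suffices to verify $x \geq 0$ (immediate from $z \geq 0$) and $x(E(U)) \leq |U| - 1$ for every nonempty $U \subsetneq V(G)$. Expanding by orientation gives
\[
x(E(U)) \;=\; \sum_{vw \in E(U)} (z_{(v,w)} + z_{(w,v)}) \;=\; \sum_{v \in U} \sum_{w \in U \cap N_G(v)} z_{(v, w)}.
\]
If $r \notin U$, then $U \subseteq V(G-r)$ and $E(U) \subseteq E(G-r)$, so the inequality follows from $x|_{E(G-r)} \in \FP(G-r)$ combined with Theorem~\ref{thm:LP2} applied to $G-r$ (the boundary case $U = V(G-r)$ being the trivial aggregated bound: every forest of $G-r$ has at most $|V(G-r)| - 1$ edges). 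If $r \in U$, the $v = r$ inner sum vanishes because $z_{(r, w)} = 0$ for every $w \in N(r)$, while for each of the $|U|-1$ vertices $v \in U \setminus \{r\}$ the inner sum is a subsum of $\sum_{w \in N_{G^+}(v)} z_{(v, w)} = 1$; summing yields $x(E(U)) \leq |U| - 1$.

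Finally, the extension complexity bound follows from the general fact that $\xc(P_1 \cap P_2) \leq \xc(P_1) + \xc(P_2)$: the polytope $\FP(G-r) \times \R^{\delta(r)}$ inherits an extended formulation of size $\xc(\FP(G-r))$ (free coordinates contribute no facets), while $Q(G)$ is described using only the $|A(G^+)| = 2(|E(G)| + |V(G)|)$ inequalities $z \geq 0$ (all other constraints are equalities and thus uncounted). The only non-routine ingredient is the arborescence calculation in the $r \in U$ case; the rest is bookkeeping.
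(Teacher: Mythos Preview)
Your proof is correct and follows essentially the same approach as the paper's: both directions are argued identically in spirit (extend $F$ to a spanning tree of $G^+$ rooted at $r$ for the forward inclusion; split the rank inequalities according to whether $r\in U$ for the reverse), and the extension complexity count is the same. Your reverse-inclusion computation regroups the sum by tail vertex rather than by ``adding back the missing outgoing $z$-terms'' as the paper does, and you are slightly more explicit about the boundary case $U=V(G-r)$, but these are cosmetic differences only.
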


\begin{proof}
Let $R(G)$ denote the right hand-side of \eqref{eq:vtx_deletion}. We prove that $\FP(G) \subseteq R(G)$ directly from the definitions, and then that $R(G) \subseteq \FP(G)$ using Theorem~\ref{thm:LP2}.

Let $F \subseteq E(G)$ be  any forest in $G$, and let $x := \chi^F$. 
First, notice that since $F \cap E(G-r)$ is a forest of $G-r$, the restriction of $x$ to $\R^{E(G-r)}$ is in $\FP(G-r)$.
Second, let $T$ be any spanning tree of $G^+$ such that $T \cap E(G) = F$. We root $T$ at $r$. For $(v,w) \in A(G^+)$, we let $z_{(v,w)} := 1$ if the parent of $v$ (in $T$ rooted at $r$) is $w$, and $z_{(v,w)} := 0$ otherwise. The resulting point $z \in \R^{A(G^+)}$ satisfies all the constraints in the extended formulation defining $Q(G)$. This proves that $x \in Q(G)$. Hence, $\FP(G)$ is contained in $R(G)$.

Next, observe that every point $x \in R(G)$ satisfies $x_{vw} \geq 0$ for every edge $vw \in E(G)$. Hence, $R(G) \subseteq \R^{E(G)}_{\geq 0}$. It suffices to prove that the rank inequality $x(E(U)) \leqslant |U| - 1$ is valid for $R(G)$, for all non-empty proper subsets $U$.

If $U$ does not contain $r$, then $x(E(U)) \leqslant |U| - 1$ is valid for $R(G)$ since it is valid for $\FP(G-r)$.

Now, assume that $r \in U$. We claim that $x(E(U)) \leqslant |U| - 1$ is valid for $Q(G)$. This of course implies that it is valid for $R(G)$. Let $x \in Q(G)$ and let $z \in \R^{A(G^+)}$ be a point that witnesses this. Then,
\begin{align*}
x(E(U)) &= \sum_{vw \in E(U)} \underbrace{(z_{(v,w)} + z_{(w,v)})}_{=x_{vw}}\\
&\leqslant \sum_{vw \in E(U)} (z_{(v,w)} + z_{(w,v)})
+ \sum_{(v,w) \in A(G^+), v \in U \setminus \{r\}, w \notin U} \underbrace{z_{(v,w)}}_{\geq 0}\\
& = \sum_{v \in U \setminus \{r\}} \underbrace{\sum_{w \in N(v)} z_{(v,w)}}_{=1} = |U| - 1\,,
\end{align*}
implying the claim. This concludes the proof.
\end{proof}

The next lemma follows directly from Lemmas~\ref{lem:2conn} and \ref{lem:vtx_deletion}.

\begin{lemma} \label{lem:separator}
Let $G$ be a graph, let $X \subseteq V(G)$, and let $G_1$ and $G_2$ be vertex-disjoint induced subgraphs of $G$ whose union is $G - X$. Then 
$$
\xc(\FP(G)) \leq \xc(\FP(G_1)) + \xc(\FP(G_2)) + 2 |X| \cdot (|E(G)| + |V(G)|)\,.
$$
\end{lemma}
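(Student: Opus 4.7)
The plan is to peel the vertices of $X$ off one at a time via Lemma~\ref{lem:vtx_deletion}, and then split the resulting graph $G-X$ into $G_1$ and $G_2$ using the disjoint-union decomposition from the proof of Lemma~\ref{lem:2conn}.

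Concretely, enumerate $X = \{v_1, \dots, v_k\}$ with $k := |X|$, and define a sequence of subgraphs $H_0 := G$ and $H_i := H_{i-1} - v_i$ for $i \in \{1, \dots, k\}$, so that $H_k = G - X$. Applying Lemma~\ref{lem:vtx_deletion} to $H_{i-1}$ with the distinguished vertex $v_i$ gives
$$\xc(\FP(H_{i-1})) \leq \xc(\FP(H_i)) + 2\bigl(|E(H_{i-1})| + |V(H_{i-1})|\bigr).$$
Each $H_{i-1}$ is a subgraph of $G$, so $|E(H_{i-1})| \leq |E(G)|$ and $|V(H_{i-1})| \leq |V(G)|$. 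Telescoping the $k$ inequalities yields
$$\xc(\FP(G)) \leq \xc(\FP(G-X)) + 2|X|\bigl(|E(G)| + |V(G)|\bigr).$$

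It remains to bound $\xc(\FP(G-X))$. Since $G_1$ and $G_2$ are vertex-disjoint induced subgraphs whose union is $G-X$, they share no vertex and no edge of $G-X$ joins them. Hence the decomposition observation used in the proof of Lemma~\ref{lem:2conn} (the case where the two pieces share at most one vertex, applied with zero shared vertices) gives $\FP(G-X) = \FP(G_1) \times \FP(G_2)$, so $\xc(\FP(G-X)) \leq \xc(\FP(G_1)) + \xc(\FP(G_2))$. Combining the two bounds gives the claim.

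I do not expect any real obstacle: the only minor point to verify is that the per-step additive cost in Lemma~\ref{lem:vtx_deletion} can be replaced by the uniform quantity $2(|E(G)| + |V(G)|)$, which is immediate since deletions only remove vertices and edges.
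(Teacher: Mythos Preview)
Your argument is correct and is exactly the intended proof: the paper merely states that the lemma follows directly from Lemmas~\ref{lem:2conn} and~\ref{lem:vtx_deletion}, and your telescoping of Lemma~\ref{lem:vtx_deletion} over the vertices of $X$ followed by the product decomposition $\FP(G-X)=\FP(G_1)\times\FP(G_2)$ is precisely how one unpacks that sentence. The only detail worth noting is that you are invoking the observation inside the proof of Lemma~\ref{lem:2conn} (two induced subgraphs sharing at most one vertex) rather than its block statement, which is fine and is what the paper has in mind.
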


We are now ready to prove our main theorem.

\begin{proof}[Proof of Theorem~\ref{thm:main2}]
For the sake of simplicity, we assume that $d \geq 1$. (Otherwise all graphs in graph class $\GG$ with at least $1/(1-d)$ vertices are disconnected.) For a positive integer $n$, let $f(n)$ denote the maximum extension complexity of the forest polytope of an $n$-vertex graph in $\GG$. For small values of $n$, we may resort to the trivial bound $f(n) \leqslant 2^{dn}$, which follows directly from Theorem~\ref{thm:LP2}. Letting $X$ be a $\frac12$-balanced separator $X$, Lemma~\ref{lem:separator} implies
$$
f(n) \leq 2 f(\lfloor \tfrac{n}{2} \rfloor) + 2(dn + n) c n^{\beta}
\leq 2 f(\lfloor \tfrac{n}{2} \rfloor) + 3 cd n^{1+\beta}\,.
$$
Since $\beta > 0$, the Master Theorem gives $f(n) \in O(cd n^{1+\beta})$. Indeed,  $f(n)=\frac{3}{1-2^{-\beta}} cdn^{1+\beta}$ satisfies this recurrence.
\end{proof}

Assuming that a $\frac12$-balanced separator $X \subseteq V(G)$ can be found in polynomial time for every $n$-vertex graph $G$ in $\GG$, the extended formulation for $\FP(G)$ given by Theorem~\ref{thm:main2} can also be written down in polynomial time. It is enough to find a $\frac23$-balanced separator in polynomial time, since the proof of Lemma~\ref{sublinear} is algorithmic. For $n$-vertex graphs as in Theorem~\ref{thm:main2}, a $\frac23$-balanced separator of size $O(cdn^{1+\beta} \sqrt{\log n})$ can be found in polynomial time using an algorithm of~\citet{FHL08}, which is thus only slightly larger than the best possible size of $O(cdn^{1+\beta})$. 
For some classes $\mathcal G$, it is known that the extra $O(\sqrt{\log n})$ factor in the separator size can be avoided, most notably for proper-minor closed classes~\cite{AST90,KR10,RW09}. 

\section{Communication Protocols} 

\subsection{Randomized Protocols} 
\label{sec:protocols}

This section describes an equivalent definition of extension complexity via the communication complexity of a certain two player game~\cite{FFGT15,zhang2012quantum}.  For $n\in\N$, let $[n]:=\{1,2,\dots,n\}$. Let $P = \conv(\{v_1,\dots, v_n\})=\{x\in \mathbb{R}^d \mid Ax \leq b\}$, where $A \in \R^{m \times d}$, $b \in \R^m$. The \defn{slack matrix} associated with these two descriptions 
of $P$ is the matrix $S \in \R^{m \times n}_+$ where $S_{i,j} := b_i-A_i v_j$ for $i\in [m]$ and $j\in [n]$.  That is, $S_{i,j}$ is the slack of point $v_j$ with respect to the inequality $A_i x \leq b_i$. The concept of slack matrix was introduced by Yannakakis~\cite{yannakakis1991expressing} and is inherently related to the extension complexity of a polytope. In particular, extended formulations for a polytope can be obtained from a deterministic protocol computing its slack matrix~\cite{yannakakis1991expressing}. This was extended by \citet{FFGT15} to randomized protocols, as we now describe.

Let $S$ be a non-negative matrix (in our setting, $S$ is a slack matrix of a given polytope). Consider two agents Alice and Bob. Alice is given as input a row index $i$ of $S$, and Bob is given a column index $j$. A \defn{randomized protocol} is a process during which Alice and Bob, given their inputs, exchange information, and at the end output a non-negative number. At each step the information sent by each player may depend on their input, on the information exchanged so far, and on an unlimited amount of random bits that each player can use. Hence, the output $s_{ij}$ of the protocol on inputs $i$, $j$ is a random variable. The protocol is said to \defn{compute $S$ in expectation} if the expectation of $s_{ij}$ is equal to $S_{i,j}$ for each row index $i$ and column index $j$ of $S$. The \defn{complexity} of a randomized protocol is the maximum number of bits exchanged by Alice and Bob in an execution of the protocol. The following result of \citet{FFGT15} establishes the equivalence between extended formulations and randomized protocols (all logarithms in this paper are binary).

\begin{theorem}[\cite{FFGT15}]
\label{thm:random}
For every polytope $P$ with at least two vertices, the minimum complexity of a randomized protocol computing a slack matrix of $P$ in expectation equals $\lceil \log (\xc(P)) \rceil$.
\end{theorem}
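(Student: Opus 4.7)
The plan is to reduce the theorem to Yannakakis' classical characterization $\xc(P) = \mathrm{rk}_+(S)$, where $\mathrm{rk}_+(S)$ denotes the \emph{nonnegative rank} of a slack matrix $S$ of $P$, i.e., the minimum $r$ for which there exist nonnegative matrices $U \in \R^{m \times r}$ and $V \in \R^{r \times n}$ with $S = UV$. Since the extension complexity and the nonnegative rank of any slack matrix agree for polytopes with at least two vertices, the theorem reduces to showing that the minimum complexity of a randomized protocol computing $S$ in expectation equals $\lceil \log \mathrm{rk}_+(S) \rceil$. I would establish this by two matching inequalities.

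For the upper bound, I would start from an optimal nonnegative factorization $S_{ij} = \sum_{k=1}^{r} U_{ik} V_{kj}$ with $r = \mathrm{rk}_+(S)$. Alice, on input $i$, normalizes her row to the probability distribution $p_k := U_{ik}/\|U_i\|_1$ on $[r]$ (the degenerate case $\|U_i\|_1 = 0$ is handled separately, since then the $i$-th row of $S$ is zero), samples an index $k \sim p$, and transmits $k$ to Bob using $\lceil \log r \rceil$ bits. Declaring the output to be $\|U_i\|_1 \cdot V_{kj}$, which both sides can jointly compute from $(i,k)$ and $(j,k)$, yields conditional expectation $\sum_k U_{ik} V_{kj} = S_{ij}$, as required.

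For the matching lower bound, given any randomized protocol of complexity $c$, I would exhibit a nonnegative factorization of $S$ of rank at most $2^c$. The protocol tree has at most $2^c$ leaves $\ell$, and the standard rectangle property of private-coin protocols, where each player's messages and coin flips depend only on that player's input and the prior transcript and are independent of the other player's coins, guarantees that the probability of reaching $\ell$ on input $(i,j)$ factors as $a_\ell(i)\, b_\ell(j)$ with $a_\ell, b_\ell \ge 0$. Letting $v_\ell \ge 0$ be the expected output conditional on reaching $\ell$, one obtains $S_{ij} = \sum_\ell v_\ell\, a_\ell(i)\, b_\ell(j)$, a nonnegative factorization of rank at most $2^c$. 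Hence $2^c \ge \mathrm{rk}_+(S)$ and, since $c$ is an integer, $c \ge \lceil \log \mathrm{rk}_+(S) \rceil$.

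The main technical point is the rectangle decomposition in the lower bound: one has to verify, in the private-coins model used in the paper, that independence of the two players' random tapes really makes the leaf probabilities split as a product of a function of $i$ and a function of $j$, and that the output at each leaf can be taken as a deterministic value (by averaging any residual coin flips). Once this is in hand and Yannakakis' theorem is invoked, the two bounds combine cleanly to give the claimed equality.
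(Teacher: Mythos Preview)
The paper does not prove Theorem~\ref{thm:random}; it is quoted from \cite{FFGT15} and used as a black box throughout Sections~\ref{sec:protocols}--\ref{sec:planar}. There is therefore nothing in the paper to compare your argument against. That said, your sketch is essentially the proof given in \cite{FFGT15}: reduce to Yannakakis' identity $\xc(P)=\mathrm{rk}_+(S)$ and then match nonnegative rank with protocol complexity in both directions.

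One technical point in your upper bound deserves attention. You declare the output to be $\|U_i\|_1\cdot V_{kj}$ and say both sides can ``jointly compute'' it, but in the model used here a single player produces the output (see the classical protocol in Section~\ref{sec:protocols}, where Alice outputs). Neither Alice (who does not know $j$) nor Bob (who does not know $\|U_i\|_1$) can produce this value alone after only $k$ has been transmitted. The standard fix is to rescale the inequalities so that every nonzero row of $U$ has unit $\ell_1$-norm; this yields another slack matrix of the same polytope with the same nonnegative rank, after which Bob can simply output $V_{kj}$. Similarly, in your lower bound the conditional expected output at a leaf $\ell$ may depend on the outputting player's input, giving $v_\ell(i)$ or $v_\ell(j)$ rather than a constant $v_\ell$; this is harmless since you can absorb that dependence into $a_\ell(i)$ or $b_\ell(j)$ and still obtain a nonnegative factorization of rank at most $2^c$. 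With these adjustments your argument is correct.
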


To give the reader some intuition about randomized protocols we now briefly describe the randomized protocol from~\cite{FFGT15}, which gives an alternative proof of the Martin--Wong bound on the extension complexity of the forest polytope.  We refer to this randomized protocol as the \defn{classical protocol}.

Let $S_G$ be the matrix with columns indexed by the forests $F$ of $G$, and rows indexed by proper, non-empty subsets $U$ of $V(G)$, where 
\[
S_G(U,F)=|U|-1-|E(F)\cap E(U)|.
\]
Note that $S_G$ is a submatrix of the slack matrix of $\FP(G)$ with respect to the linear description given in Theorem~\ref{thm:LP2}. As  follows easily from basic facts on extended formulations, any protocol computing $S_G$ in expectation with complexity $c$ can be turned into an extended formulation for $\FP(G)$ of size at most $2^c + |E(G)|$. Indeed, there are only $|E(G)|$ additional inequalities besides the rank inequalities $x(E(U)) \leq |U| - 1$, which are captured in $S_G$. Because $\xc(\FP(G)) \geq |E(G)|$, it suffices to give a randomized protocol that computes $S_G$ in expectation. 

We now describe the classical protocol. Alice receives as input a non-empty set $U \subsetneq V(G)$, and Bob receives a forest $F$ of $G$. Their goal is to compute $S_G(U,F)$ in expectation.  
It will be helpful to focus on spanning trees instead of forests. 
To do so, we add a new vertex $x$ to $G$ and make it adjacent to all vertices of $G$, and denote the resulting graph by $G^+$. 
Then, Bob extends his forest $F$ into a spanning tree $T$ of $G^+$ by adding some of the edges incident to $x$. 
Since every edge added is incident to $x$, none of the added edges have both ends in $U$. 
Therefore, $S_{G^+}(U,T)=S_{G}(U, F)$. 
Observe also that $S_{G^+}(U,T)=k-1$ where $k$ is the number of connected components of $G^+[U] \cap T$.

 Alice begins by sending any vertex $u \in U$ to Bob.  Bob roots $T$ at $u$ and orients the edges of $T$ away from $u$.  Bob then samples an arc $a$ of $T$ uniformly at random and sends $a$ to Alice.  Alice outputs $|V(G^+)|-1$ if the head of $a$ is in $U$ and the tail of $a$ is not in $U$. Otherwise, Alice outputs $0$.  It is easy to check that there is a bijection between the arcs $a$ for which Alice outputs $|V(G^+)|-1$ and the components of $G^+[U] \cap T$ which do not contain $u$. Therefore, in expectation, the output of the protocol is $k-1$, as desired. The complexity of the protocol is $\log(|V(G^+)|)+\log (|E(G^+)|)+O(1)$. Hence the size of the resulting formulation is in $O(|V(G)| \cdot |E(G)|)$ by Theorem~\ref{thm:random}.

\subsection{Proof of Main Theorem via Protocols} \label{sec:proof}

We now give the second proof of our main theorem.  Unlike the direct proof given in Section~\ref{DirectProof}, this proof is not constructive, since Theorem~\ref{thm:random} does not give an efficient procedure to write down the extended formulation (although such a procedure is known for deterministic protocols \cite{aprile2020extended}). On the other hand, the proof via protocols inspired the direct proof. Indeed, the main idea is that Alice can use less bits to send vertex $u$ if the graph has balanced separators. This led us to investigate how Martin's formulation behaves in the presence of separators, resulting in Lemmas~\ref{lem:2conn} and \ref{lem:vtx_deletion}.

The proof uses the following definitions. For $\beta\in(0,1)$ and $c\in\mathbb{R}^+$, a \defn{$(c, \beta)$-balanced separator tree} for a graph $G$ is a binary tree $T_G$ defined recursively as follows.  The root of $T_G$ is $(G,X)$ where $X$ is a $\frac{1}{2}$-balanced separator of $G$ of order at most $c|V(G)|^\beta$.  Suppose $(H,Y)$ is a node of $T$ such that $H-Y$ is the disjoint union of two graphs $H_1$ and $H_2$ with $|V(H_1)|, |V(H_2)| \leq \frac{1}{2} |V(H)|$.  If $|V(H)| > c$, then $(H,Y)$ has two children $(H_1, Y_1)$ and $(H_2, Y_2)$ where $Y_i$ is a $\frac{1}{2}$-balanced separator of $H_i$ of order at most $c|H_i|^\beta$.  If $|V(H)| \leq c$, then $(H,Y)$ is a leaf of $T$. By Lemma~\ref{sublinear}, if $\GG$ is a graph class closed under induced subgraphs and $\GG$ has strongly sublinear separators, then every graph $G \in \GG$ has a {$(c, \beta)$-balanced separator tree} for some $c \in \mathbb{R}^+$ and $\beta\in(0,1)$.
 
In Edmonds' original description of the forest polytope (Theorem~\ref{thm:LP2}), there is a constraint for \emph{every} proper, non-empty subset $U$ of $V(G)$, but it turns out that we only need those constraints when $G[U]$ is connected (see  \cite[Theorem~40.5]{schrijver2003combinatorial}). The classical protocol does not need that $G[U]$ is connected, but our proof crucially exploits this fact.

\begin{proof}[Second proof of Theorem~\ref{thm:main2}] 
Let $G$ be an $n$-vertex connected graph in $\GG$. We prove the theorem by describing an appropriate randomized protocol. As in the classical protocol, Alice receives a non-empty, proper subset $U$ of $V(G)$, such that $G[U]$ is connected as input, Bob receives a forest $F$ of $G$ as input, and their goal is to compute $S_G(U,F)$ in expectation. The players agree beforehand on a $(c, \beta)$-balanced separator tree $T_G$ of $G$.  In contrast to the classical protocol, the main idea is that Alice does not send a vertex $u\in U$ right away, but uses $T_G$ to delay sending $u$ to Bob, until she can do so using ``few'' bits. 

Let $(G,X)$ be the root of $T_G$.  If $U\cap X\neq \emptyset$, then Alice sends a vertex $u$ of $U\cap X$ to Bob and the protocol proceeds as in the classical protocol. Otherwise, let $(A_1, X_1)$ and $(B_1, Y_1)$ be the children of $(G,X)$.  Since $U\cap X= \emptyset$ and $G[U]$ is connected, either $U\subseteq V(A_1)$ or $U\subseteq V(B_1)$. Alice sends one bit to Bob to signal which case occurs, then she recurses on the corresponding subgraph (say, without loss of generality, $A_1$). 
This creates a path from the root $(G,X):=(A_0, X_0)$ to $(A_t, X_t)$ in $T_G$ where $U\cap X_i=\emptyset$ for $i\in[0,t-1]$, and either $U \cap X_t\neq \emptyset$ or $|V(A_t)| \leq c$. If $U \cap X_t\neq \emptyset$, then Alice sends Bob a vertex $u \in U \cap X_t$.  If $|V(A_t)| \leq c$, then Alice sends Bob an arbitrary vertex $u \in U$.  

Let $F_t:=F \cap A_t$. 
Observe that $S_{G}(U, F)=S_{A_t}(U, F_t)$. 
Hence, at this point Alice and Bob can proceed as in the classical protocol, but with the graph $A_t$ and the forest $F_t$ instead, to compute (in expectation) the slack $S_{A_t}(U, F_t)$. 

It remains to analyze the complexity of the protocol.  Alice spends $t-1$ bits to tell Bob the path in $T_G$ from $(G,X)$ to $(A_t, X_t)$.  Since $u \in X_t$ or $|V(A_t)| \leq c$, Alice can send $u$ to Bob using at most $\log c |V(A_t)|^\beta \leq \log c n^\beta$ bits. By assumption, $|E(A_t)| \leq d |V(A_t)|$. Therefore, $|E(A_t^+)| \leq (d+1)|V(A_t)|$, where $A_t^+$ denotes the graph $A_t$ plus a universal vertex.  Since $|V(A_t)| \leq \frac{1}{2^t}|V(G)|$, we have $|E(A_t^+)| \leq \frac{d+1}{2^t}n$.  Finally, since the arc $a$ can be oriented in two ways, there are at most $\frac{d+1}{2^{t-1}}n$ possibilities for the choice of the arc $a$ chosen by Bob after switching to the classical protocol.  Thus, Bob can send $a$ to Alice using at most $\log (\frac{d+1}{2^{t-1}}n)$ bits. Therefore, the total amount of communication required for the protocol is at most
\begin{align*}
(t-1) \,+\, \lceil \log\left(cn^\beta\right) \rceil \,+\, \lceil \log\left( \tfrac{d+1}{2^{t-1}}n\right) \rceil  \,= \, 
\log\left(c(d+1)n^{1+\beta}\right) + O(1).
\end{align*}
Since $S_G$ is a submatrix of the slack matrix of $\FP(G)$ with respect to the linear description given in Theorem~\ref{thm:LP2}, and there are only $|E(G)|=O(dn)$ additional rows of the slack matrix, it follows from Theorem~\ref{thm:random} that $\xc(\FP(G)) \in O(cdn^{1+\beta})$, as required.  \end{proof}

\subsection{Planar graphs} \label{sec:planar}
 
As mentioned in the introduction,  \citet{Williams2002} proved that for every connected planar graph $G$ on $n$ vertices, $\STP(G)$ has an extended formulation of size $O(n)$. We reprove this result by giving a randomized protocol for (the non-trivial part of) the slack matrix of $\STP(G)$.  The main idea is exploiting the relationship between the spanning trees of a planar graph and its dual. 

Let $G$ be a planar graph, with a fixed embedding. Let $F(G)$ be the set of faces of $G$. The \defn{dual} $G^*$ is the multigraph with $V(G^*):=F(G)$, where for each edge $e=vw$ of $G$ incident with faces $f$ and $g$, there is an  edge $e^*=fg$ in $G^*$, called the \defn{dual} of $e$. For a set $X\subseteq E(G)$, let $X^*$ be the set of edges of $G^*$ dual to the edges in $X$. It is well known that $X\subseteq E(G)$ is the edge-set of a spanning tree of $G$ if and only if $E(G^*)\setminus X^*$ is the edge-set of a spanning tree of $G^*$. For any spanning tree $T$ of $G$, let $T^*$ be the dual spanning tree of $G^*$ with $E(T^*):= E(G^*)\setminus E(T)^*$. Note that the definitions of $\STP(G)$ and $\FP(G)$ extend to the setting of multigraphs. 
In particular, letting $\varphi: \R^E\rightarrow \R^E$ with $\varphi(x)=\mathbf{1}-x$, we have that $\STP(G^*)=\varphi(\STP(G))$.
This implies a one-to-one correspondence between the vertices of the two polytopes and between their facets. The next lemma shows that this isomorphism preserves the slack. 

Recall that, by Lemma~\ref{lem:2conn}, we may assume that $G$ is 2-connected. Indeed, if $G$ is not 2-connected and $G_1, \dots, G_k$ are the blocks of $G$, then since $|V(G_1)|+\dots+|V(G_k)|=|V(G)|+k-1\leq 2|V(G)|$, a bound of the form $\xc(\STP(G_i))\leq c|V(G_i)|$ for each $i\in[k]$ and for some constant $c$ implies $\xc(\STP(G))\leq 2c|V(G)|$. It is known (see \cite{feichtner2005matroid}) that for any 2-connected graph $G$, the facets of $\STP(G)$ are defined by the non-empty sets $U\subsetneq V(G)$ such that both $G[U]$ and $G/U$ are 2-connected. Here $G/U$ is the graph obtained from $G$ by contracting the subgraph $G[U]$ to a single vertex. 

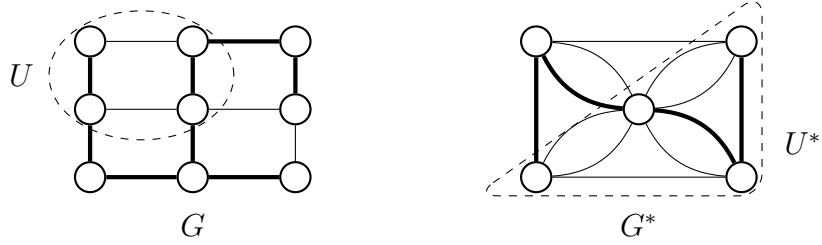
\begin{figure}[htbp]
\centering
\begin{tikzpicture}[inner sep=4pt,scale=.9]
\tikzstyle{vtx}=[circle,draw,thick];
\tikzstyle{vtxg}=[circle,draw,thick,fill=black!50];
\tikzstyle{vtxr}=[circle,draw,thick,fill=red!50];

\draw[dashed] (-0.75, 0.5) ellipse (1.35cm and 0.95cm);
\node at (-2.5,0.5) {$U$};

\node[vtx] (v1) at (-1.5,-1) {};
\node[vtx] (v2) at (-1.5,0) {};
\node[vtx] (v3) at (-1.5,1) {};

\node[vtx] (v4) at (0,-1) {};
\node[vtx] (v5) at (0,0) {};
\node[vtx] (v6) at (0,1) {};

\node[vtx] (v7) at (1.5,-1) {};
\node[vtx] (v8) at (1.5,0) {};
\node[vtx] (v9) at (1.5,1) {};

\draw[ultra thick] (v1) -- (v2);
\draw[ultra thick] (v2) -- (v3);
\draw[ultra thick] (v4) -- (v5);
\draw[ultra thick] (v5) -- (v6);
\draw (v7) -- (v8);
\draw[ultra thick] (v8) -- (v9);
\draw[ultra thick] (v1) -- (v4);
\draw (v2) -- (v5);
\draw (v3) -- (v6);
\draw[ultra thick] (v4) -- (v7);
\draw (v5) -- (v8);
\draw[ultra thick] (v6) -- (v9);
\node at (0,-1.7) {$G$};

\end{tikzpicture}\hspace{2cm}
\begin{tikzpicture}[inner sep=4pt,scale=.9]
\tikzstyle{vtx}=[circle,draw,thick];
\tikzstyle{vtxg}=[circle,draw,thick,fill=black!50];
\tikzstyle{vtxr}=[circle,draw,thick,fill=red!50];

\draw [dashed, rounded corners=3mm, scale=1.5, xshift = -0.2cm, yshift = +0.15cm] (-1.4,-1)--(1.4,1)--(1.4,-1)--cycle;
\node at (2.4,-0.5) {$U^*$};
\node at (0,-1.7) {$G^*$};
\node[vtx] (v1) at (-1.5,-1) {};

\node[vtx] (v3) at (-1.5,1) {};

\node[vtx] (v5) at (0,0) {};

\node[vtx] (v7) at (1.5,-1) {};

\node[vtx] (v9) at (1.5,1) {};

\draw[ultra thick](v1) -- (v3);
\draw (v1) -- (v7);
\draw[ultra thick] (v7) -- (v9);
\draw (v3) -- (v9);
\draw (v1) to [bend right] (v5);
\draw (v1) to [bend left]  (v5);
\draw[ultra thick] (v3) to [bend right] (v5);
\draw (v3) to [bend left]  (v5);
\draw[ultra thick] (v7) to [bend right] (v5);
\draw (v7) to [bend left]  (v5);
\draw (v9) to [bend right] (v5);
\draw (v9) to [bend left]  (v5);

\end{tikzpicture}
\caption{On the left, a planar graph $G$, a set $U$ defining a facet of $\STP(G)$ and a spanning tree of $G$ (thick edges). On the right, the dual graph $G^*$ (with the vertex corresponding to the outer face of $G$ drawn in the center), together with $U^*$ and $T^*$ (thick edges). The equation in Lemma \ref{lem:planar} can be easily verified for this example. } 
\label{fig:planar}
\end{figure}

\begin{lemma}\label{lem:planar}
Let $G$ be a 2-connected planar graph with a fixed embedding. Consider a facet of $\STP(G)$ defined by a non-empty subset $U\subsetneq V(G)$. Let $U^*$ be the set of vertices of $G^*$ corresponding to faces of $G$ that have at least one vertex not in $U$. Then, for any spanning tree $T$ of $G$, 
$$
|U|-1-|E(T)\cap E(U)|=|U^*|-1-|E(T^*)\cap E(U^*)|.
$$
Hence, the facet of $\STP(G^*)$ defined by $U^*$ corresponds (under $\varphi$) to the facet of $\STP(G)$ defined by $U$, and the two facets have the same slack.
\end{lemma}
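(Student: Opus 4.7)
The plan is to establish the slack identity directly, using Euler's formula on $G$ and on $G[U]$ together with the planar duality $e^*\in E(T^*)\iff e\notin E(T)$. The first task is to identify the set $\{e\in E(G) : e^*\in E(U^*)\}$. By definition, $e^*\in E(U^*)$ means both faces of $G$ adjacent to $e$ lie in $U^*$, i.e., both contain a vertex of $W:=V(G)\setminus U$. Any edge $e\in E(G)\setminus E(U)$ satisfies this, since its $W$-endpoint lies on both of its adjacent faces. A priori, some edge $e\in E(U)$ could also satisfy this if both its adjacent faces in $G$ happen to contain a $W$-vertex, and the crux of the proof is to rule out this possibility under our hypotheses.

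This step is where I expect the main obstacle, and where the $2$-connectedness of both $G[U]$ and $G/U$ is essential. I would consider the embedding of $G[U]$ inherited from $G$. Since $G[U]$ is $2$-connected, each of its faces is a disk bounded by a cycle; call a face of $G[U]$ \emph{small} if all its boundary vertices lie in $U$ (equivalently, it is a face of $G$ corresponding to a vertex of $V(G^*)\setminus U^*$), and \emph{big} otherwise. Each connected component of $G[W]$ lies inside a unique face of $G[U]$, since its edges cannot cross $G[U]$ in the embedding. The $2$-connectedness of $G/U$ implies that the contracted vertex $u$ is not a cut vertex of $G/U$, so $G/U-u=G[W]$ is connected; hence all of $W$ lies in a single face of $G[U]$, and $G[U]$ has exactly one big face. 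Since every edge of $2$-connected $G[U]$ bounds two \emph{distinct} faces of $G[U]$, and only one of them can be big, each $e\in E(U)$ is adjacent to at least one small face $F$; this $F$ is then also a face of $G$ belonging to $V(G^*)\setminus U^*$, so at least one of the two faces of $G$ adjacent to $e$ fails to lie in $U^*$. Consequently $e^*\notin E(U^*)$ for all $e\in E(U)$, and $\{e : e^*\in E(U^*)\}=E(G)\setminus E(U)$.

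The identity is now routine bookkeeping. Applying Euler's formula to the connected graph $G[U]$, which has $|V(G^*)\setminus U^*|+1$ faces (the small ones plus one big one), gives $|E(U)|=|U|+|V(G^*)\setminus U^*|-1$. Combining this characterization with the duality $e^*\in E(T^*)\iff e\notin E(T)$ yields
\[
|E(T^*)\cap E(U^*)| \;=\; |E(G)\setminus E(U)|-|E(T)\cap(E(G)\setminus E(U))| \;=\; |E(G)|-|E(U)|-|V(G)|+1+|E(T)\cap E(U)|.
\]
Using Euler's formula on $G$ to rewrite $|E(G)|-|V(G)|+1$ as $|U^*|+|V(G^*)\setminus U^*|-1$ and substituting the value of $|E(U)|$, the right-hand side simplifies to $|U^*|-|U|+|E(T)\cap E(U)|$, and the desired slack equality follows immediately by rearrangement.
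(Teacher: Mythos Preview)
Your approach is correct and follows essentially the same route as the paper: both arguments establish the key equivalence $e\in E(U)\iff e^*\notin E(U^*)$ using the $2$-connectedness of $G[U]$ and of $G/U$, show that the induced embedding of $G[U]$ has exactly one face that is not a face of $G$, and then finish with Euler's formula on $G$ and on $G[U]$ (the paper proves the edge equivalence directly via a cycle argument and the single-extra-face claim separately, whereas you prove the latter first and deduce the former from it, but the content is the same). One cosmetic slip: your definition of a ``small'' face of $G[U]$ as one whose boundary vertices all lie in $U$ is vacuously satisfied by every face of $G[U]$; the condition you actually want, and use, is the one in your parenthetical---that the face is also a face of $G$, equivalently that its interior contains no vertex of $W$.
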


\begin{proof}
 We first prove that an edge $e=uv\in E(G)$ is in $E(U)$ if and only if its dual edge $e^*\in E(G^*)$ is not in $E(U^*)$. Let $f,g\in F(G)$ be the faces incident to $e$. First, if $e^*\not\in E(U^*)$, then all the vertices of $f$ and $g$, in particular $u$ and $v$, are in $U$, implying that $e\in E(U)$. Now assume that $e\in E(U)$; that is, $u,v\in U$. We show that $e\not\in E(U^*)$, equivalently that one of $f$ and $g$ is not in $U^*$. Aiming for a contradiction, assume that there are vertices $w,w'\not\in U$ that are on the boundaries of $f$ and $g$, respectively. Recall that $G[U]$ and $G/U$ are 2-connected. First, since $G[U]$ is 2-connected, there is a $uv$-path $P$ in $G[U]$ that does not contain $e$. (Note that here we only need that $G[U]$ is 2-edge-connected.)\ Let $C$ be the cycle made by $P$ and $e$. Without loss of generality, $f$ is contained in the interior of $C$ and $g$ is contained in the exterior of $C$. Since $w$ and $w'$ are not in $U$, $w$ is strictly in the interior of $C$ and $w'$ is strictly in the exterior of $C$. Thus $C$ separates $w$ and $w'$. Since every edge of $C$ is in $E(U)$, this contradicts the assumption that $G/U$ is 2-connected. Thus $e$ is in $E(U)$ if and only if $e^*$ is not in $E(U^*)$. This shows that $U^*$ defines the facet of $\STP(G^*)$ corresponding (via the isomorphism $\varphi$) to the facet of $\STP(G)$ defined by $U$. Moreover,  $|E(G)|=|E(U)|+|E(U^*)|$. 
  
We now determine the number of faces in the embedding of $G[U]$ induced by that of $G$. Consider $G$ and $G[U]$ to be embedded in the sphere. We claim that there is exactly one face of $G[U]$ that is not a face of $G$. Since $U\neq V(G)$, there is at least one such face. Suppose that $f$ and $g$ are distinct faces of $G[U]$ that are not faces of $G$. Let $D_f$ and $D_g$ be the discs associated with $f$ and $g$. Then there is a vertex $v\in V(G)\setminus U$ in the interior of $D_f$, and there is a vertex $w\in V(G)\setminus U$ in the interior of $D_g$. Thus $v$ and $w$ are separated by $U$, which contradicts the assumption that $G/U$ is 2-connected. Thus there is exactly one face of $G[U]$ that is not a face of $G$. 
 Each face of $G$ either has all its vertices in $U$, and is thus a  face of $G[U]$, or it  corresponds to a vertex in $U^*$. With the above claim, this shows that $G[U]$ has $|F(G)|-|U^*|+1$ faces. 
  
  We now prove the thesis. 
  By Euler's formula applied to $G[U]$ and to $G$,
\begin{align*}
|U| - |E(U)| + ( |F(G)|-|U^*|+1 ) = 2 = |V(G)| - |E(G)| + |F(G)|.
\end{align*}
Since  $|E(G)|=|E(U)|+|E(U^*)|$,
\begin{align*}
|U|  = |U^*| + |V(G)|-1 - |E(U^*)|.
\end{align*}
For any spanning tree $T$ of $G$, since $|V(G)|-1 = |E(T)| = |E(T)\cap E(U)| + |E(T)\setminus E(U)|$,
\begin{align*}
|U|-|E(T)\cap E(U)| = |U^*|     + |E(T)\setminus E(U)| - |E(U^*)|.
\end{align*}
Observe that the edges of $E(T)\setminus E(U)$ are dual to the edges of $E(U^*)\setminus E(T^*)$. Thus
\begin{align*}
|U|-1-|E(T)\cap E(U)| 
& = |U^*|-1     + |E(U^*)\setminus E(T^*)| - |E(U^*)|\\
& = |U^*|-1     -|E(U^*)\cap E(T^*)|.
\end{align*}
Hence the facet of $\STP(G^*)$ defined by $U^*$ has the same slack as the corresponding facet of $\STP(G)$ defined by $U$. 
\end{proof}

 We now describe a protocol for $\STP(G)$ when $G$ is an $n$-vertex  2-connected planar graph, based on a simple modification of the classical protocol described in Section~\ref{sec:protocols}. Alice receives as input a non-empty set $U \subsetneq V(G)$, such that $G[U]$ and $G/U$ are 2-connected, and Bob receives a spanning tree $T$ of $G$ (since we consider the spanning tree polytope of $G$ here). Fix a vertex $v_0$ and a face $f_0$ of $G$, such that the boundary of $f_0$ contains $v_0$. The crucial observation is that, for any $U$, if $v_0\not\in U$, then $f_0\in U^*$ by definition. Hence Alice, instead of sending a vertex $u\in U$, just sends one bit indicating whether $v_0\in U$ or $f_0\in U^*$. In the first case, the protocol proceeds as in the classical protocol. In the second case, Alice and Bob switch to the dual graph $G^*$, where $U$ is replaced by $U^*$ and $T$ by its dual $T^*$, and go on with the classical protocol. Note that $G^*$ is not necessarily a simple graph, but the classical protocol does not require this. The correctness of the protocol in the second case is guaranteed by Lemma~\ref{lem:planar}. The complexity of the protocol is at most \[
 1+ \max\{\lceil \log ( 2|E(G)|) \rceil, \lceil \log (2|E(G^*)|) \rceil\}=1+\lceil \log ( 2|E(G)|) \rceil \leq \log(n)+O(1).
 \] 
By Theorem~\ref{thm:random}, the size of the resulting extended formulation is $O(n)$, thus matching Williams' result.

 \section{Open Problems}
 \label{sec:open_problems}
 
 We have shown that for every proper minor-closed graph class $\GG$, the spanning tree polytope of every connected $n$-vertex graph in $\GG$ has extension complexity in $O(n^{3/2})$. This gives some evidence in support of the following conjecture of \citet{FHJP17}. 

\begin{conjecture}[\cite{FHJP17}]
For every proper minor-closed graph class $\GG$, the spanning tree polytope of every connected $n$-vertex graph in $\GG$ has extension complexity in $O(n)$.
\end{conjecture}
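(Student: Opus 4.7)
The plan is to combine structural decomposition theorems for proper minor-closed classes with the randomized-protocol framework of Section~\ref{sec:proof}. By the Robertson--Seymour structure theorem, every graph in $\GG$ admits a tree decomposition of bounded adhesion whose torsos are nearly embeddable in a fixed surface (up to a bounded number of apices and vortices). The $O(n)$ conjecture would follow from three ingredients: (a) an $O(n)$ bound for the spanning tree polytope of graphs of bounded Euler genus, (b) a reduction from nearly embeddable torsos to bounded-genus embedded graphs, and (c) a ``gluing'' lemma showing that a tree decomposition of bounded adhesion whose torsos each have $O(|V(\cdot)|)$ extension complexity yields an $O(n)$ bound for the whole graph.

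\textbf{Gluing and reduction to torsos.} For step (c), I would refine Lemma~\ref{lem:2conn} and Lemma~\ref{lem:vtx_deletion} to the setting of tree decompositions of bounded adhesion $k$. The idea is that each adhesion set plays the role of a separator, but crucially has bounded size, so Martin-style variables attached to its vertices contribute only $O(1)$ per adhesion edge of the decomposition tree, summing to $O(n)$ overall. In protocol language, Alice can navigate the decomposition tree using $O(1)$ bits per edge to locate the torso containing her connected set $U$; once inside a torso, she runs the torso's protocol. For step (b), each apex vertex contributes $O(|V(G)|)$ via Lemma~\ref{lem:vtx_deletion}, so a bounded number of apices is absorbable. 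Vortices can be handled by observing that a vortex of bounded pathwidth admits an $O(|V|)$ extended formulation for its spanning tree polytope by~\cite{KKT20}, which can be grafted onto the surface via the gluing lemma.

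\textbf{The bounded-genus case.} The heart of the conjecture lies in step (a): proving $\xc(\STP(G)) \in O(n)$ for $n$-vertex graphs of Euler genus at most $g$. My approach would extend the planar protocol of Section~\ref{sec:planar}. Fix a cellular embedding of $G$ and a ``cut system'' of $O(g)$ simple closed curves whose removal turns the surface into a disc. For a connected $U\subsetneq V(G)$, Alice identifies, using $O(\log g)=O(1)$ bits, a ``canonical'' boundary piece of $U$ relative to the cut system and the fixed vertex-face incidence $(v_0,f_0)$, reducing her communication to a constant number of bits (in analogy with the single bit distinguishing $v_0\in U$ from $f_0\in U^*$ in the planar case). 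Bob's response then uses primal or dual spanning tree information, costing $\log |E(G^+)|+O(1)=\log n+O(1)$ bits. The total complexity is $\log n + O(1)$, giving $\xc(\STP(G)) \in O(n)$ by Theorem~\ref{thm:random}.

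\textbf{Main obstacle.} The hardest step is undeniably the bounded-genus case, which is itself a long-standing open problem; the current best bound is $O(n^{3/2})$~\cite{FHJP17}. The obstruction is that Lemma~\ref{lem:planar} relies on a global primal--dual involution that is special to the sphere: in higher genus, noncontractible cycles break the bijection between bounded faces of $G[U]$ and vertices of $U^*$, and there is no single $1$-bit classification that always places a boundary element of $U$ at a fixed root. Making the ``cut system'' idea rigorous will require a careful choice of canonical representatives for the homology classes of cycles bounding $U$, and a delicate accounting to ensure that the extra surface combinatorics do not leak more than $O(1)$ bits into the protocol. Any resolution of the full conjecture beyond genus zero would likely have to introduce a genuinely new structural insight, after which extending from bounded genus to all proper minor-closed classes via the gluing framework above should be comparatively routine.
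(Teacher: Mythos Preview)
The statement you attempt to prove is a \emph{conjecture} in the paper, stated in the Open Problems section; the paper does \emph{not} prove it, and indeed it remains open. There is thus no ``paper's own proof'' to compare against. Your proposal is not a proof either, and you acknowledge this yourself: your step~(a), the $O(n)$ bound for bounded Euler genus, is exactly the open problem from~\cite{FHJP17} whose current best bound is the $O(n^{3/2})$ achieved in the present paper. Everything downstream is conditional on resolving~(a).

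Even granting~(a), the remainder of the sketch has nontrivial gaps. For step~(c), your claim that Lemma~\ref{lem:2conn} ``refines'' to bounded adhesion is optimistic: that lemma relies on adhesion at most~$1$, where spanning trees factor as a product. For adhesion $k\ge 2$ there is no such product decomposition, and the Martin-type variables attached to an adhesion set do not obviously cost only $O(1)$ per decomposition edge. In protocol language, your ``$O(1)$ bits per edge of the decomposition tree'' is fine, but the decomposition tree from the Robertson--Seymour structure theorem is not balanced, so navigating it can cost $\Theta(n)$ bits in the worst case; rebalancing techniques for tree decompositions control width, not adhesion, so it is unclear how to obtain a balanced decomposition with bounded adhesion and nearly-embeddable torsos. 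Finally, even after Alice locates $U$ in some part using few bits, Bob must still transmit an arc from the remaining subgraph, and without a balance guarantee that subgraph may have $\Theta(n)$ edges, yielding only the trivial $O(n\log n)$ or worse. Your identification of the genus case as the crux is correct, but steps~(b) and~(c) would also require genuine new arguments rather than routine adaptations of Lemmas~\ref{lem:2conn} and~\ref{lem:vtx_deletion}.
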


Much stronger results might hold. Is it true that $\xc(\STP(G))\in O(|V(G)|)$ for connected graphs $G$ in:
\begin{itemize}[itemsep=0ex,topsep=1ex]
\item graph classes admitting strongly sublinear separators, 
\item graph classes with bounded expansion,
\item graph classes with bounded maximum degree, or
\item graph classes with bounded density?
\end{itemize}

Given the lack of lower bounds, a first question is whether $\xc(\STP(G))\in O(|V(G)|)$ for a random cubic graph $G$. Another example of interest is the 1-subdivision of a complete graph.

Another possible direction of investigation is the extension complexity of \defn{matroid base polytopes}, of which spanning tree polytopes are a special case (corresponding to graphic matroids).  \citet{rothvoss2013some} proved (via a counting argument) that there are matroid base polytopes with exponential extension complexity.  However, there is no known \emph{explicit} family of matroid base polytopes with super-polynomial extension complexity. On the other hand, polynomial-size extended formulations are known for some classes that strictly contain graphic matroids; see \citep{aprile2019regular, aprile2021extended, aprile20182, conforti2015subgraph}. All such formulations have deep roots in Martin's and Wong's formulations for the spanning tree polytope, which is a further reason to investigate its extension complexity.

\section*{Acknowledgments}

Tony Huynh, Gwena\"{e}l Joret and David Wood are supported by the Australian Research Council.  Gwena\"{e}l Joret is also supported by an ARC grant from the Wallonia-Brussels Federation of Belgium, and a CDR grant from the Belgian National Fund for Scientific Research (FNRS).  Samuel Fiorini is supported by the FNRS, through PDR grant BD-OCP/T.0087.20. This work was partially supported by ERC Consolidator grant FOREFRONT/615640. Manuel Aprile is supported by a SID 2019 grant of the University of Padova.

  \let\oldthebibliography=\thebibliography
  \let\endoldthebibliography=\endthebibliography
  \renewenvironment{thebibliography}[1]{%
    \begin{oldthebibliography}{#1}%
      \setlength{\parskip}{0ex}%
      \setlength{\itemsep}{0ex}%
  }{\end{oldthebibliography}}

\end{document}